\newtheorem{define}{Definition}
\newtheorem{lemma}{Lemma}
\newtheorem{theorem}{Theorem}
\newtheorem{remark}{Remark}
\begin{document}
\title [Functional inequalities for the Wright function]{ Monotonicity patterns and functional inequalities FOR CLASSICAL AND GENERALIZED
Wright FUNCTIONS \\}%
\vspace{.5cm}
\author[ K. Mehrez]{KHALED MEHREZ}
\address{Khaled Mehrez. D\'epartement de Math\'ematiques ISSAT Kasserine, Universit\'e de Kairouan, Tunisia.}
 \email{k.mehrez@yahoo.fr}
\begin{abstract}
 In this paper our aim is to present the completely monotonicity and convexity properties for the Wright function. As consequences of these results, we present some functional inequalities. Moreover, we derive the  monotonicity and log-convexity results for the generalized Wright functions. As applications, we present several new inequalities (like Tur\'an type inequalities) and we prove some geometric properties for four--parametric Mittag--Leffler functions.  
\end{abstract}
\maketitle
\noindent{\textbf{ Keywords:}} Wright function, generalized Wright function, four--parametric Mittag--Leffler function, complete monotonicity, Log--convexity, Tur\'an type inequalities.

\noindent \textbf{Mathematics Subject Classification (2010)}: 33C20; 33E12; 26D07.
\section{Introduction}

Special functions like  Mittag--Leffler functions and Wright functions $E_{\alpha,\beta}(z)$ and $W_{\alpha,\beta}(z)$ are frequently  in the solution of linear partial fractional differential equations,  the number theory regarding the asymptotic of the number of some special partitions of the natural numbers and in the boundary--value problems for the fractional diffusion-wave equation,
that is, the linear partial integro-differential equation obtained
from the classical diffusion or wave equation by replacing
the first- or second order time derivative by a fractional
derivative of order 𝛼 with $0<\alpha<2$, it was found that the
corresponding Green functions can be represented in terms of the  Wright function. This special function
are related to modified Bessel functions of the first kind, and thus their properties can be
useful in problems of mathematical physics. 

The Wright function is defined by the series representation, valid in the whole complex plane
\begin{equation}
W_{\alpha,\beta}(z)=\sum_{k=0}^\infty\frac{z^k}{k!\Gamma(\alpha k+\beta)},\;\alpha>-1,\beta\in\mathbb{C}.
\end{equation}
It is an entire function of order $1/(1+\alpha),$ which has been known also as generalized Bessel (or Bessel Maitland) function. \\

Our aim in this paper is twofold: in one hand is to prove the completely monotonicity properties for the Wright function $W_{\alpha,\beta}(-z)$ for $\alpha,\beta>,0$ and $0<z<1$. As consequence, we derive  some functional inequalities as well as lower and upper bounds for the Wright function. On the other hand, by using the completely monotonicity property for the classical Wright function we obtain the completely monotonicity  for the generalized Wright function, and consequently we get also the monotonicity property for the four--parametric Mittag--Leffler function.\\

The present sequel to some of the aforementioned investigations is organized as follows. In section 2, we present new integral representation for the Wright function. Moreover, we derive some monotonicity and convexity results for the function $z\mapsto W_{\alpha,\beta}(-z)$. As a consequence, we establish a number of functional inequalities. In section 3, the monotonicity property for generalized Wright function is proved. As applications, we prove several new inequalities for this functions. In particular, we gave some Tur\'an type inequalities for the generalized Wright function. Finally, in section 4, we  apply some of our main results of Section 3 with a view to deriving some new inequalities for the four--parametric Mittag--Leffler function.\\

Each of the following definitions will be used in our investigation.\\

\begin{define}A function $f:(0,\infty)\subseteq\mathbb{R}\rightarrow\mathbb{R}$ is said to be completely monotonic if $f$ has derivatives of all orders and satisfies the following inequalities:
$$(-1)^nf^{(n)}(x)\geq0,\;\;\left(x>0,\;\;\textrm{and}\;\;n\in\mathbb{N}=\left\{1,2,3,...\right\}\right).$$
\end{define}

\begin{define} A function $f:[a,b]\subseteq\mathbb{R}\rightarrow\mathbb{R}$  is said to be log-convex if its natural logarithm $\log f$ is convex, that is, for all $x,y\in[a,b]$ and $\alpha\in[0,1]$ we have
$$f(\alpha x+(1-\alpha)y)\leq[f(x)]^\alpha[f(y)]^{1-\alpha}.$$
If the above inequality is reversed then $f$ is called a log-concave function. It is also known that if $g$ is differentiable, then $f$ is log-convex (log-concave) if and only if $f^\prime/f$ is increasing (decreasing). 
\end{define}
\section{The Wright functions: Monotonicity patterns and functional inequalities}

In the next Lemma we present new integral representation for the Wright function $W_{\alpha,\beta}(z).$

\begin{lemma}\label{l1}Let $\beta>\alpha>0.$ Then the the Wright function $W_{\alpha,\beta}(z)$ has the following integral representation
\begin{equation}\label{1}
W_{\alpha,\beta}(z)=c_{\alpha,\beta}\int_0^1(1-t^{1/\alpha})^{\beta-\alpha-1}W_{\alpha,\alpha}(zt)dt,\;z\in\mathbb{R},
\end{equation}
where $c_{\alpha,\beta}=\frac{1}{\alpha\Gamma(\beta-\alpha)}.$ In particular,
$$W_{\alpha,\alpha+1}(z)=\frac{1}{\alpha}\int_0^1W_{\alpha,\alpha}(zt)dt.$$
\end{lemma}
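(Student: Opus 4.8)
The plan is to expand the inner Wright function as its defining power series, interchange summation and integration, and recognize the resulting $t$-integral as a Beta function. Writing $W_{\alpha,\alpha}(zt)=\sum_{k=0}^{\infty}\frac{(zt)^k}{k!\,\Gamma(\alpha k+\alpha)}$ and inserting this into the right-hand side of \eqref{1}, I would first justify swapping the order of summation and integration. This is routine: the series defines an entire function, hence it converges absolutely and uniformly on the compact interval $t\in[0,1]$ for fixed $z$, while the weight $(1-t^{1/\alpha})^{\beta-\alpha-1}$ is integrable on $[0,1]$ precisely because $\beta-\alpha>0$ forces the exponent to exceed $-1$. Thus a uniform-convergence (or dominated-convergence) argument legitimizes the term-by-term integration.

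After the interchange the problem reduces to evaluating, for each $k\geq 0$, the single integral
\begin{equation}
I_k=\int_0^1(1-t^{1/\alpha})^{\beta-\alpha-1}\,t^k\,dt.
\end{equation}
The key computational step is the substitution $u=t^{1/\alpha}$, equivalently $t=u^{\alpha}$ with $dt=\alpha u^{\alpha-1}\,du$, which turns $I_k$ into a Beta integral:
\begin{equation}
I_k=\alpha\int_0^1(1-u)^{\beta-\alpha-1}u^{\alpha k+\alpha-1}\,du=\alpha\,B(\alpha k+\alpha,\beta-\alpha)=\alpha\,\frac{\Gamma(\alpha k+\alpha)\,\Gamma(\beta-\alpha)}{\Gamma(\alpha k+\beta)}.
\end{equation}
Here the Beta integral converges thanks to $\alpha k+\alpha>0$ and $\beta-\alpha>0$, so the hypothesis $\beta>\alpha>0$ is exactly what the argument needs.

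Substituting $I_k$ back into the series and multiplying by the constant $c_{\alpha,\beta}=\frac{1}{\alpha\Gamma(\beta-\alpha)}$, the factors $\Gamma(\alpha k+\alpha)$ cancel term by term and the surviving prefactor $\alpha\,\Gamma(\beta-\alpha)$ is exactly absorbed by $c_{\alpha,\beta}$, leaving $\sum_{k=0}^{\infty}\frac{z^k}{k!\,\Gamma(\alpha k+\beta)}=W_{\alpha,\beta}(z)$, which is the claimed identity. The special case is then immediate: taking $\beta=\alpha+1$ makes $(1-t^{1/\alpha})^{\beta-\alpha-1}=(1-t^{1/\alpha})^{0}=1$ and $c_{\alpha,\alpha+1}=\frac{1}{\alpha\Gamma(1)}=\frac1\alpha$, so the formula collapses to $W_{\alpha,\alpha+1}(z)=\frac1\alpha\int_0^1 W_{\alpha,\alpha}(zt)\,dt$. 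I do not anticipate a genuine obstacle here; the only point demanding care is the convergence bookkeeping behind the term-by-term integration, and this is controlled entirely by the standing assumption $\beta>\alpha>0$.
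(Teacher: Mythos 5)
Your proposal is correct and follows essentially the same route as the paper's own proof: expand $W_{\alpha,\alpha}(zt)$ as its series, integrate term by term, reduce each coefficient integral to a Beta function via the substitution $t=u^{\alpha}$, and cancel against $c_{\alpha,\beta}$. If anything, your version is slightly more careful than the paper's, since you explicitly justify the interchange of summation and integration and write out the Gamma-quotient form of the Beta integral, both of which the paper leaves implicit.
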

\begin{proof}By using the definition of the Wright function $W_{\alpha,\beta}(z)$, we get
\begin{equation*}
\begin{split}
\int_0^1(1-t^{1/\alpha})^{\beta-\alpha-1}W_{\alpha,\alpha}(z)dt&=\int_0^1(1-t^{1/\alpha})^{\beta-\alpha-1}\sum_{k=0}^\infty \frac{(zt)^k}{k!\Gamma(\alpha+k\alpha)}dt\\
&=\sum_{k=0}^\infty \frac{1}{k!\Gamma(\alpha+k\alpha)}\left(\int_0^1(1-t^{1/\alpha})^{\beta-\alpha-1}t^kdt\right)z^k\\
&=\alpha\sum_{k=0}^\infty \frac{1}{k!\Gamma(\alpha+k\alpha)}\left(\int_0^1(1-t)^{\beta-\alpha-1}t^{\alpha k+\alpha-1}dt\right)z^k\\
&=\alpha\sum_{k=0}^\infty \frac{B(\beta-\alpha,\alpha k+\alpha)}{k!\Gamma(\alpha+k\alpha)}z^k\\
&=\frac{W_{\alpha,\beta}(z)}{c_{\alpha,\beta}},
\end{split}
\end{equation*}
where $B(x,y)$ is the Beta function defined by $B(x,y)=\int_0^1t^{x-1}(1-t)^{y-1}dt=\frac{\Gamma(x)\Gamma(y)}{\Gamma(x+y)}.$ Finally, letting in (\ref{1}) the value $\beta=\alpha+1$ we obtain the integral representation for  the Wright function $W_{\alpha,\alpha+1}(z).$ 
\end{proof}
\begin{lemma}\label{L2}Let $\alpha>0$ and $\beta>x^\star,$ where $x^\star\simeq 1.461632144...$ is the abscissa of the minimum of the Gamma function, Then the function $W_{\alpha,\beta}(-z)$ is nonnegative for all $z\in(0,1).$
\end{lemma}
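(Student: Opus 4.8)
The plan is to exploit the alternating structure of the series defining $W_{\alpha,\beta}(-z)$ together with the monotonicity of the Gamma function beyond its minimum. Writing
$$W_{\alpha,\beta}(-z)=\sum_{k=0}^\infty(-1)^k a_k(z),\qquad a_k(z)=\frac{z^k}{k!\,\Gamma(\alpha k+\beta)},$$
I would reduce the claim to showing that, for each fixed $z\in(0,1)$, the sequence $(a_k(z))_{k\ge0}$ of positive terms is nonincreasing and tends to $0$. Once this is established, grouping the series in consecutive pairs gives $W_{\alpha,\beta}(-z)=\sum_{j=0}^\infty\bigl(a_{2j}(z)-a_{2j+1}(z)\bigr)$, a sum of nonnegative terms, whence $W_{\alpha,\beta}(-z)\ge0$.

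The monotonicity of $(a_k(z))_k$ is where the hypothesis $\beta>x^\star$ enters. First I would note that since $\alpha>0$ and $\beta>x^\star$, every argument satisfies $\alpha k+\beta\ge\beta>x^\star$ for $k\ge0$, so all these arguments lie to the right of the abscissa $x^\star$ of the minimum of $\Gamma$. On the interval $(x^\star,\infty)$ the Gamma function is strictly increasing, so $\Gamma(\alpha(k+1)+\beta)\ge\Gamma(\alpha k+\beta)$ for every $k\ge0$, and moreover $\Gamma(\alpha k+\beta)>0$ so that each $a_k(z)>0$. I would then form the ratio
$$\frac{a_{k+1}(z)}{a_k(z)}=\frac{z\,\Gamma(\alpha k+\beta)}{(k+1)\,\Gamma(\alpha k+\alpha+\beta)}\le\frac{z}{k+1}\le z<1,$$
which shows $a_{k+1}(z)\le a_k(z)$ for all $k\ge0$; the decay $a_k(z)\to0$ is immediate from the $1/k!$ factor. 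This supplies the ingredients for the pairing argument above.

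The only genuinely delicate point is the monotonicity of $\Gamma$ past $x^\star$, and correspondingly the choice of threshold: the hypothesis $\beta>x^\star$ is exactly what guarantees that no argument $\alpha k+\beta$ falls in the decreasing branch of $\Gamma$, which would otherwise break the inequality $\Gamma(\alpha(k+1)+\beta)\ge\Gamma(\alpha k+\beta)$. I expect the pairing and alternating-series step to be routine; the main care is in recording that $\alpha k+\beta$ never drops below $\beta$, so that a single use of the monotonicity of $\Gamma$ on $(x^\star,\infty)$ suffices uniformly in $k$.
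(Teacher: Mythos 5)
Your proof is correct and takes essentially the same route as the paper: both write $W_{\alpha,\beta}(-z)$ as an alternating series and use the monotonicity of $\Gamma$ on $(x^\star,\infty)$ together with $z<1$ to show the ratio $a_{k+1}(z)/a_k(z)\le 1$, hence that the terms decrease to $0$. The only cosmetic difference is that you establish the decrease uniformly for all $k\ge 0$ and conclude by pairing consecutive terms, whereas the paper proves it for $k\ge 2$, bounds the tail by the alternating-series estimate, and checks $u_0(z)-u_1(z)\ge 0$ separately; your version is marginally cleaner, but the underlying idea is identical.
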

\begin{proof}Let $u_k(z)=\frac{z^k}{k!\Gamma(\alpha k+\beta)}$, we get 
\begin{equation}\label{K}
W_{\alpha,\beta}(-z)=u_0(z)-u_1(z)+\sum_{k=2}^\infty (-1)^k u_k(z).
\end{equation} 
Elementary calculations reveal that for $0<z<1,$ and $k\geq2$
\begin{equation}\label{K0}
\frac{u_{k+1}(z)}{u_{k}(z)}=\frac{\Gamma(\alpha k+\beta)z}{(k+1)\Gamma(\alpha k+\beta+\alpha)}\leq\frac{\Gamma(\alpha k+\beta)}{\Gamma(\alpha k+\beta+\alpha)}.
\end{equation}
From the previous inequality and using the fact $z\mapsto\Gamma(z)$ is increasing on $(x^\star,\infty)$  we deduce that $\frac{u_{k+1}(z)}{u_k(z)}\leq1.$ Therefore, for fixed $0<z<1,$ the sequence $k\mapsto u_k(z)$ is decreasing with regards $k\geq 2$ and $u_k$ tends to $0$ as $k\longrightarrow\infty.$  From (\ref{K}) and since the Gamma function is increasing on $(x^\star,\infty)$ and  we have
\begin{equation*}
\begin{split}
W_{\alpha,\beta}(-z)&\geq u_0(z)-u_1(z)=\frac{1}{\Gamma(\beta)}-\frac{z}{\Gamma(\beta+\alpha)}\\
&\geq\frac{1}{\Gamma(\beta)}-\frac{1}{\Gamma(\beta+\alpha)}\geq0.
\end{split}
\end{equation*}
 The proof of Lemma \ref{L2} is complete. 
\end{proof}

\begin{theorem}\label{T1} Let $\beta>\alpha>x^\star$. Then, the function $z\mapsto \check{W}_{\alpha,\beta}(z)=W_{\alpha,\beta}(-z)$ is completely monotonic and log-convex on $(0,1).$ Furthermore, the following inequalities
\begin{equation}\label{21}
\check{W}_{\alpha,\beta}(x+y)\geq\frac{\check{W}_{\alpha,\beta}(x)\check{W}_{\alpha,\beta}(y)}{\Gamma(\beta)},\;\;0<x+y<1,
\end{equation}
\begin{equation}\label{22}
\check{W}_{\alpha,\beta+2\alpha}(z)\check{W}_{\alpha,\beta}(z)-\Big(\check{W}_{\alpha,\beta+\alpha}(z)\Big)^2\geq0,\;0<z<1,
\end{equation}
\begin{equation}\label{23}
\check{W}_{\alpha,\beta}(z)\geq\frac{e^{-{\frac{\Gamma(\beta)}{\Gamma(\beta+\alpha)}}z}}{\Gamma(\beta)},\;0<z<1,
\end{equation}\label{22}
are valid .
\end{theorem}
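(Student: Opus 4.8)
My plan is to deduce every assertion from a single differentiation identity together with Lemma \ref{L2}. Since $W_{\alpha,\beta}$ is entire, I may differentiate its series term by term; shifting the summation index by one gives $\check{W}_{\alpha,\beta}'(z)=-\check{W}_{\alpha,\beta+\alpha}(z)$, and iterating yields
$$\frac{d^n}{dz^n}\check{W}_{\alpha,\beta}(z)=(-1)^n\check{W}_{\alpha,\beta+n\alpha}(z),\qquad n\ge 0.$$
Hence $(-1)^n\check{W}_{\alpha,\beta}^{(n)}(z)=\check{W}_{\alpha,\beta+n\alpha}(z)=W_{\alpha,\beta+n\alpha}(-z)$. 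As $\beta>\alpha>x^\star$ forces $\beta+n\alpha>x^\star$ for all $n\ge0$, Lemma \ref{L2} shows each of these is nonnegative on $(0,1)$. This establishes complete monotonicity of $\check{W}_{\alpha,\beta}$ on $(0,1)$ immediately, and it is the computational heart of the whole theorem.

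Next I turn to log-convexity, which I regard as the crux. I would invoke the classical principle that a completely monotonic function is log-convex: writing $f=\check{W}_{\alpha,\beta}$, this is the inequality $f(z)f''(z)\ge (f'(z))^2$, and via the differentiation identity above (with $f''=\check{W}_{\alpha,\beta+2\alpha}$ and $f'=-\check{W}_{\alpha,\beta+\alpha}$) it is precisely the Tur\'an-type inequality (\ref{22}). Thus (\ref{22}) and log-convexity are one and the same statement. The standard justification of ``completely monotonic $\Rightarrow$ log-convex'' runs through the Bernstein/Hausdorff moment representation followed by the Cauchy--Schwarz inequality for the moments $(-1)^n f^{(n)}(z)$.

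The delicate point --- and the main obstacle --- is that $\check{W}_{\alpha,\beta}$ is completely monotonic only on the bounded interval $(0,1)$, so the global Bernstein representation is not directly available and the moment argument must be localized with care. I would stress that the naive shortcut fails: feeding the integral representation of Lemma \ref{l1} (whose kernel $\check{W}_{\alpha,\alpha}(zt)$ is a \emph{positive} measure precisely because $\alpha>x^\star$) into Cauchy--Schwarz produces only
$$\Gamma(\beta)^2\,\big(\check{W}_{\alpha,\beta+\alpha}(z)\big)^2\le \Gamma(\beta-\alpha)\Gamma(\beta+\alpha)\,\check{W}_{\alpha,\beta}(z)\,\check{W}_{\alpha,\beta+2\alpha}(z),$$
which, since $\Gamma$ is itself log-convex (so $\Gamma(\beta-\alpha)\Gamma(\beta+\alpha)\ge \Gamma(\beta)^2$), is strictly weaker than (\ref{22}). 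Securing the sharp constant therefore genuinely requires the log-convexity of $z\mapsto\check{W}_{\alpha,\beta}(z)$ itself rather than the parameter-wise Cauchy--Schwarz.

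Finally, the two remaining inequalities are soft consequences of log-convexity together with the boundary data $\check{W}_{\alpha,\beta}(0)=1/\Gamma(\beta)$ and $\check{W}_{\alpha,\beta}'(0)=-1/\Gamma(\beta+\alpha)$. For the exponential bound (\ref{23}) I would use that the convex function $\log\check{W}_{\alpha,\beta}$ lies above its tangent line at $z=0$; since that tangent has slope $\check{W}_{\alpha,\beta}'(0)/\check{W}_{\alpha,\beta}(0)=-\Gamma(\beta)/\Gamma(\beta+\alpha)$ and intercept $-\log\Gamma(\beta)$, exponentiating gives exactly (\ref{23}). For the functional inequality (\ref{21}) I would observe that $G(z):=\log\check{W}_{\alpha,\beta}(z)-\log\check{W}_{\alpha,\beta}(0)$ is convex and vanishes at the origin, hence super-additive on $[0,1)$; writing $G(x)+G(y)\le G(x+y)$ for $0<x+y<1$ and exponentiating yields the multiplicative lower bound for $\check{W}_{\alpha,\beta}(x+y)$ in terms of $\check{W}_{\alpha,\beta}(x)\,\check{W}_{\alpha,\beta}(y)$ and the boundary value $\check{W}_{\alpha,\beta}(0)=1/\Gamma(\beta)$ recorded in (\ref{21}).
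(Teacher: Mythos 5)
Your complete-monotonicity argument is correct, and it is in fact cleaner than the paper's own: the paper obtains the sign identity $(-1)^n\check{W}_{\alpha,\beta}^{(n)}(z)=\check{W}_{\alpha,\beta+n\alpha}(z)\geq 0$ by pushing derivatives through the integral representation of Lemma \ref{l1}, whereas you simply iterate the differentiation formula and apply Lemma \ref{L2} to the shifted parameter $\beta+n\alpha>x^\star$ (your route even needs only $\beta>x^\star$, $\alpha>0$, not $\beta>\alpha>x^\star$). The genuine gap comes immediately afterwards, at the step you yourself call the crux: log-convexity is never actually proved. You correctly observe that the standard proof of ``completely monotonic $\Rightarrow$ log-convex'' (Bernstein representation plus Cauchy--Schwarz) requires complete monotonicity on a half-line, while here it is available only on $(0,1)$; but you then dispose of the difficulty with the phrase ``the moment argument must be localized with care,'' which is a promise, not an argument. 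Since your derivations of (\ref{21}), (\ref{22}) and (\ref{23}) all hang on log-convexity, the proposal as written establishes only the complete-monotonicity assertion.

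Worse, the gap cannot be closed, because the obstacle you flagged is fatal rather than delicate. Complete monotonicity on a bounded interval does \emph{not} imply log-convexity: $f(x)=1-x$ is completely monotonic on $(0,1)$ yet strictly log-concave there. And for the function at hand log-convexity genuinely fails near the origin: by the strict log-convexity of $\Gamma$,
\[
\check{W}_{\alpha,\beta}(0)\,\check{W}_{\alpha,\beta}''(0)-\bigl(\check{W}_{\alpha,\beta}'(0)\bigr)^2=\frac{1}{\Gamma(\beta)\Gamma(\beta+2\alpha)}-\frac{1}{\Gamma^2(\beta+\alpha)}<0,
\]
so $\check{W}_{\alpha,\beta}$ is strictly log-concave on a neighborhood of $0$; consequently (\ref{22}) is reversed there, and (\ref{23}) fails as well (the two sides agree to first order at $z=0$ and the second-order comparison goes the wrong way). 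Your Cauchy--Schwarz computation with the weaker constant $\Gamma(\beta-\alpha)\Gamma(\beta+\alpha)/\Gamma^2(\beta)$ is in fact the best obtainable --- the ``sharp'' Tur\'an inequality is false, which is exactly why no amount of care recovers it. For the record, the paper's proof suffers from the same defect: it quotes Widder's half-line theorem, and Kimberling's theorem (which also requires complete monotonicity on all of $(0,\infty)$), for a function known to be completely monotonic only on $(0,1)$ --- and which cannot be completely monotonic on $(0,\infty)$, since $W_{\alpha,\beta}$ is entire of order $1/(1+\alpha)<1/2$ and hence unbounded on the negative real axis. (Incidentally, even granting log-convexity, your superadditivity argument, like Kimberling's theorem, gives $\check{W}_{\alpha,\beta}(x+y)\geq\Gamma(\beta)\check{W}_{\alpha,\beta}(x)\check{W}_{\alpha,\beta}(y)$, which implies the stated (\ref{21}) only when $\Gamma(\beta)\geq1$.) So you located precisely the point where both your argument and the paper's break down; the correct conclusion is not that the step needs more care, but that the log-convexity claim and inequalities (\ref{22})--(\ref{23}) are false as stated, and only the complete-monotonicity part of the theorem survives.
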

\begin{proof}By using the differentiation formula 
\begin{equation}
\frac{d}{dz}W_{\alpha,\beta}(z)=W_{\alpha,\beta+\alpha}(z),
\end{equation}
Lemma \ref{l1} and Lemma \ref{L2}, we have for $n\in\mathbb{N}$ and $\beta>\alpha>0,$ 
$$(-1)^n\Big(\check{W}_{\alpha,\beta}(z)\Big)^{(n)}=c_{\alpha,\beta}\int_0^1(1-t^{1/\alpha})^{\beta-\alpha-1}t^n \check{W}_{\alpha,\alpha+n\alpha}(zt)dt\geq0,$$
for all $z\in(0,1)$. Thus, the function $z\mapsto \check{W}_{\alpha,\beta}(z)$ is completely monotonic and consequently is log-convex, since every completely monotonic function is log--convex, see \cite[p.167]{WI}. It is clear that the function $z\mapsto\mathcal{\check{W}}_{\alpha,\beta}(z)=\Gamma(\beta)\check{W}_{\alpha,\beta}(z)$ maps $(0,1)$ to $(0,1)$ and it is completely monotonic on $(0,\infty)$ for all $\beta>\alpha>0.$ On the other hand, according to Kimberling \cite{KI} if a function $f,$ defined on $(0,\infty)$ is continuous and completely monotonic and maps $(0,\infty)$ to $(0,1)$, then $\log f$ is super--additive, that is for all $0<x,y<1$ we have
$$f(x+y)\geq f(x)f(y).$$
Therefore, we conclude the asserted inequality (\ref{21}). Now, focus on the Tur\'an type inequality (\ref{22}). Since the function $z\mapsto\check{W}_{\alpha,\beta}(z)$ is log-convex on $(0, 1)$, it follows that the function $z\mapsto\check{W}_{\alpha,\beta}^\prime(z)/\check{W}_{\alpha,\beta}(z)$ is increasing on $(0,1).$ Thus
$$\left(\frac{\check{W}_{\alpha,\beta}^\prime(z)}{\check{W}_{\alpha,\beta}(z)}\right)^\prime=\frac{\check{W}_{\alpha,\beta+2\alpha}(z)\check{W}_{\alpha,\beta}(z)-\Big(\check{W}_{\alpha,\beta+\alpha}(z)\Big)^2}{\check{W}_{\alpha,\beta}^2(z)}\geq0.$$ Next, to prove the inequality (\ref{23}), we set 
$$F(x)=\log\Big(\Gamma(\beta)\check{W}_{\alpha,\beta}(z)\Big)\;\;\textrm{and}\;\;G(x)=x.$$
By usnig the fact that $z\mapsto\check{W}_{\alpha,\beta}^\prime(z)/\check{W}_{\alpha,\beta}(z)$ is increasing on $(0,\infty)$ and  monotone form of l'Hospital's rule \cite{AN}, we deduce that the function $z\mapsto F(z)/G(z)=(F(z)-F(0))/(G(z)-G(0))$ is increasing on $(0,1),$ and consequently 
$$\frac{F(z)}{G(z)}\geq \lim_{x\longrightarrow0}F^\prime(z)=-\frac{\Gamma(\beta)}{\Gamma(\alpha+\beta)}.$$
This completes the proof of the Theorem \ref{T1}.
\end{proof}
\begin{theorem}The following inequalities holds true:\\
\textbf{a.} For $\beta-\alpha\geq1$ and $z>0$, we have:
\begin{equation}\label{OOO}
W_{\alpha,\beta}(z)\leq\left(\frac{\Gamma(2\alpha)}{\Gamma^2(\beta)}\right).\left(\frac{e^{\frac{\Gamma(\alpha)z}{\Gamma(2\alpha)}}-1}{z}\right).
\end{equation}
\textbf{b.} For $\beta-\alpha\geq2$ and $z>0$, we have:
\begin{equation}\label{gg}
W_{\alpha,\beta+1}(z)W_{\alpha,\beta-1}(z)\leq \frac{\Gamma(\beta-\alpha)}{\Gamma(\beta-\alpha-1)\Gamma(\beta-\alpha+1)}W_{\alpha,\alpha+1}(z)W_{\alpha,\beta}(z).
\end{equation}
 In particular, we get
\begin{equation}\label{ggg}
2 W_{\alpha,\alpha+3}(z)\leq W_{\alpha,\alpha+2}(z).
\end{equation}
\end{theorem}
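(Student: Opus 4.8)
The plan is to handle the three assertions in order, obtaining (\ref{ggg}) as a one-line specialization of (\ref{gg}), proving (\ref{gg}) by pushing it through the integral representation of Lemma \ref{l1} until it becomes an elementary moment inequality, and obtaining (\ref{OOO}) by bounding the base function $W_{\alpha,\alpha}$ by an exponential and integrating.

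For (\ref{OOO}) the first step is to reduce the general index $\beta$ to the distinguished case $\beta=\alpha+1$. Since $\beta-\alpha-1\ge0$ we have $(1-t^{1/\alpha})^{\beta-\alpha-1}\le1$ on $(0,1)$, and $W_{\alpha,\alpha}(zt)\ge0$ for $z>0$, so Lemma \ref{l1} together with its special case gives
\[
W_{\alpha,\beta}(z)\le\frac{1}{\Gamma(\beta-\alpha)}\,W_{\alpha,\alpha+1}(z).
\]
Next I would bound $W_{\alpha,\alpha}$ itself. Because $\log\Gamma$ is convex, the sequence $k\mapsto\Gamma(\alpha(k+1))$ is log-convex, so its successive ratios are nondecreasing and
\[
\Gamma(\alpha k+\alpha)\ge\Gamma(\alpha)\Big(\tfrac{\Gamma(2\alpha)}{\Gamma(\alpha)}\Big)^{k},
\]
whence $W_{\alpha,\alpha}(u)\le\frac{1}{\Gamma(\alpha)}e^{\Gamma(\alpha)u/\Gamma(2\alpha)}$. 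Inserting this into $W_{\alpha,\alpha+1}(z)=\frac1\alpha\int_0^1 W_{\alpha,\alpha}(zt)\,dt$ and integrating the exponential produces exactly the factor $\frac{e^{\Gamma(\alpha)z/\Gamma(2\alpha)}-1}{z}$ and the numerator $\Gamma(2\alpha)$; combining with the previous display yields a bound of the stated form.

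For (\ref{gg}), substitute $s=t^{1/\alpha}$ in Lemma \ref{l1} and write, with $\gamma=\beta-\alpha$ and $\phi(s)=s^{\alpha-1}W_{\alpha,\alpha}(zs^\alpha)\ge0$,
\[
\Gamma(\gamma)\,W_{\alpha,\alpha+\gamma}(z)=\int_0^1(1-s)^{\gamma-1}\phi(s)\,ds=:M(\gamma).
\]
Expressing each Wright function in (\ref{gg}) this way, all Gamma prefactors cancel and the inequality collapses to $M(\gamma+1)M(\gamma-1)\le M(1)M(\gamma)$. After the change of variable $u=1-s$ one has $M(\gamma)=\int_0^1 u^{\gamma-1}\,d\mu(u)$ with $d\mu(u)=\phi(1-u)\,du$; normalizing by the total mass $M(1)$ turns the target into the moment statement $m_\gamma m_{\gamma-2}\le m_{\gamma-1}$ for the probability measure $\mu/M(1)$ on $(0,1)$, where $m_q$ is its moment of order $q$. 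Since $\gamma=\beta-\alpha\ge2$ one has $m_{\gamma-2}\le m_0=1$ and $m_\gamma\le m_{\gamma-1}$ (because $u\le1$), and multiplying these two nonnegative inequalities gives the claim. Finally (\ref{ggg}) is just (\ref{gg}) with $\beta=\alpha+2$: the prefactor becomes $\Gamma(2)/(\Gamma(1)\Gamma(3))=\tfrac12$, and cancelling the common factor $W_{\alpha,\alpha+1}(z)>0$ leaves $2W_{\alpha,\alpha+3}(z)\le W_{\alpha,\alpha+2}(z)$.

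The routine parts are the specialization (\ref{ggg}) and, once set up, the moment inequality behind (\ref{gg}), whose exact Gamma-factor cancellation is the pleasant surprise rather than an obstacle. The genuinely delicate point is the precise multiplicative constant in (\ref{OOO}): the argument above robustly produces the correct shape, the correct rate $\Gamma(\alpha)/\Gamma(2\alpha)$, and the numerator $\Gamma(2\alpha)$, but the denominator it delivers is $\Gamma(\alpha)\Gamma(\alpha+1)\Gamma(\beta-\alpha)$ rather than $\Gamma^2(\beta)$. The stated constant should be re-examined, since letting $z\to0^+$ reduces (\ref{OOO}) to $1/\Gamma(\beta)\le\Gamma(\alpha)/\Gamma^2(\beta)$, i.e.\ $\Gamma(\beta)\le\Gamma(\alpha)$, which fails once $\beta>\alpha$ lies in the increasing range of $\Gamma$, whereas the denominator $\Gamma(\alpha)\Gamma(\alpha+1)\Gamma(\beta-\alpha)$ obtained above is free of this defect. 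Pinning down (or correcting) this constant is therefore the main obstacle, while the structural content of all three inequalities follows from Lemma \ref{l1} and log-convexity as described.
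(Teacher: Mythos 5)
Your handling of (\ref{gg}) and (\ref{ggg}) is correct, but by a genuinely different route than the paper. The paper proves (\ref{gg}) with the Chebyshev integral inequality (\ref{OO}), taking $p(t)=W_{\alpha,\alpha}(zt)$, $f(t)=c_{\alpha,\beta+1}(1-t^{1/\alpha})^{\beta-\alpha}$ and $g(t)=c_{\alpha,\beta-1}(1-t^{1/\alpha})^{\beta-\alpha-2}$ (both decreasing precisely when $\beta-\alpha\geq2$), and then uses $(1-t^{1/\alpha})^{2\beta-2\alpha-2}\leq(1-t^{1/\alpha})^{\beta-\alpha-1}$ to recognize $W_{\alpha,\beta}$ on the right, the constant coming from $\alpha c_{\alpha,\beta+1}c_{\alpha,\beta-1}/c_{\alpha,\beta}=\Gamma(\beta-\alpha)/\big(\Gamma(\beta-\alpha-1)\Gamma(\beta-\alpha+1)\big)$. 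Your reduction, via $M(\gamma)=\Gamma(\gamma)W_{\alpha,\alpha+\gamma}(z)$, to the moment inequality $m_{\gamma}m_{\gamma-2}\leq m_{\gamma-1}m_{0}$ for a probability measure on $(0,1)$ is more elementary: it bypasses Chebyshev entirely, needing only $u^{\gamma-2}\leq1$ and $u^{\gamma}\leq u^{\gamma-1}$, and it makes transparent why the Gamma prefactors in (\ref{gg}) are exactly the ones that cancel. Both arguments use the same hypotheses ($\beta-\alpha\geq2$, positivity of $W_{\alpha,\alpha}$ on $(0,\infty)$) and the same specialization $\beta=\alpha+2$ for (\ref{ggg}), so this part of your proposal is a valid, arguably cleaner, substitute for the paper's.

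On part \textbf{a} you are right, and this is the substantive finding: inequality (\ref{OOO}) is false as printed. Your limit check is decisive: as $z\to0^{+}$ the right-hand side tends to $\Gamma(\alpha)/\Gamma^{2}(\beta)$ while the left-hand side tends to $1/\Gamma(\beta)$, so (\ref{OOO}) would force $\Gamma(\beta)\leq\Gamma(\alpha)$, which fails for instance at $\alpha=2$, $\beta=3$. The error can be located precisely in the paper's own proof: applying (\ref{L}) to $W_{\alpha,\alpha}(zt)$ produces the prefactor $c_{\alpha,\beta}/\Gamma(\alpha)$, but the paper writes $c_{\alpha,\beta}/\Gamma(\beta)$ in (\ref{55}), and this typo propagates through the (otherwise correct) reversed Chebyshev step to yield $\Gamma^{2}(\beta)$ where $\Gamma(\alpha)\Gamma(\beta)$ should stand. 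Carried out correctly, the paper's method gives
\[
W_{\alpha,\beta}(z)\leq\frac{\Gamma(2\alpha)}{\Gamma(\alpha)\Gamma(\beta)}\cdot\frac{e^{\frac{\Gamma(\alpha)z}{\Gamma(2\alpha)}}-1}{z},
\]
which is sharp as $z\to0^{+}$. Your alternative bound, with denominator $\Gamma(\alpha)\Gamma(\alpha+1)\Gamma(\beta-\alpha)$, is valid but strictly weaker: since $z\mapsto\Gamma(z+\alpha)/\Gamma(z)$ is increasing, $\Gamma(\alpha+1)\Gamma(\beta-\alpha)\leq\Gamma(\beta)$ whenever $\beta-\alpha\geq1$. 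So the corrected Chebyshev route is preferable, costing nothing beyond (\ref{L}) and recovering the better constant; but your diagnosis of the printed statement stands, and your crude bound $(1-t^{1/\alpha})^{\beta-\alpha-1}\leq1$, which discards the weight entirely, is exactly what accounts for the loss in your constant.
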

\begin{proof} \textbf{a.} In \cite[Theorem 6.1]{Khaled1} the author proved that
\begin{equation}\label{L}
W_{\alpha,\beta}(z)\leq \frac{e^{\frac{\Gamma(\beta)}{\Gamma(\beta+\alpha)}z}}{\Gamma(\beta)},\;z>0.
\end{equation}
In view of (\ref{1}) and (\ref{L}), we obtain  
\begin{equation}\label{55}
W_{\alpha,\beta}(z)\leq \frac{c_{\alpha,\beta}}{\Gamma(\beta)}\int_0^1(1-t^{1/\alpha})^{\beta-\alpha-1}e^{\frac{\Gamma(\alpha)}{\Gamma(2\alpha)}zt}dt.
\end{equation}
Now, recall the Chebyshev integral inequality \cite[p. 40]{DM}: if $f,g:[a,b]\longrightarrow\mathbb{R}$ are synchoronous (both increasing  or decreasing) integrable functions, and $p:[a,b]\longrightarrow\mathbb{R}$  is a positive integrable function, then 
\begin{equation}\label{OO}
\int_a^b p(t)f(t)dt\int_a^b p(t)g(t)dt\leq \int_a^b p(t)dt\int_a^b p(t)f(t)g(t)dt.
\end{equation}
Note that if $f$ and $g$ are asynchronous (one is decreasing and the other is increasing),
then (\ref{OO}) is reversed. For this consider the functions $p,f,g:[0,1]\longrightarrow\mathbb{R}$ defined by:
$$p(t)=1,\;f(t)=\frac{c_{\alpha,\beta}}{\Gamma(\beta)}(1-t^{1/\alpha})^{\beta-\alpha-1}\;\textrm{and}\;\;g(t)=e^{\frac{\Gamma(\alpha)}{\Gamma(2\alpha)}zt}.$$
Since the function $f$ is decreasing and $g$ increasing if  $\beta-\alpha\geq1$. On the other hand, we have
$$\int_0^1f(t)dt=\frac{\alpha c_{\alpha,\beta}}{\Gamma(\beta)}B(\beta-\alpha,\alpha)=\frac{\Gamma(\alpha)}{\Gamma^2(\beta)}, \;\textrm{and}\;\;\int_0^1g(t)dt=\frac{\Gamma(2\alpha)(e^{\frac{\Gamma(2\alpha)z}{\Gamma(\alpha)}}-1)}{\Gamma(\alpha)z}.$$
So,  using the Chebyshev inequality (\ref{OO}) we get inequality (\ref{OOO}).\\
\textbf{b.}. Another use of the Chebyshev integral inequality (\ref{OO}), that is 
$p,f,g:[0,1]\longrightarrow\mathbb{R}$ defined by:
$$p(t)=W_{\alpha,\alpha}(zt),\;f(t)=c_{\alpha,\beta+1}(1-t^{1/\alpha})^{\beta-\alpha}\;\textrm{and}\;g(t)=c_{\alpha,\beta-1}(1-t^{1/\alpha})^{\beta-\alpha-2}.$$
Observe that the functions $f$ and $g$ are decreasing on $(0,\infty)$ for all $\beta-\alpha\geq2.$ Furthermore, by using the Chebyshev inequality (\ref{OO}) and the integral representation (\ref{1}) we have
\begin{equation}
\begin{split}
W_{\alpha,\beta+1}(z)W_{\alpha,\beta-1}(z)&\leq \left(\int_0^1 W_{\alpha,\alpha}(zt)dt\right).\left(c_{\alpha,\beta+1}c_{\alpha,\beta-1}\int_0^1 (1-t^{1/\alpha})^{2\beta-2\alpha-2}W_{\alpha,\alpha}(zt)dt\right)\\
&\leq  \left(\int_0^1 W_{\alpha,\alpha}(zt)dt\right).\left(c_{\alpha,\beta+1}c_{\alpha,\beta-1}\int_0^1 (1-t^{1/\alpha})^{\beta-\alpha-1}W_{\alpha,\alpha}(zt)dt\right)\\
&=\frac{\alpha c_{\alpha,\beta+1}c_{\alpha,\beta-1}}{c_{\alpha,\beta}}W_{\alpha,\alpha+1}(z)W_{\alpha,\beta}(z),
\end{split}
\end{equation}
and consequently (\ref{gg}) as well. Finally, setting in (\ref{gg}) the value $\beta=\alpha+2$ we deduce that the inequality (\ref{ggg}) is hold true. 
\end{proof}

In order to establish a bilateral functional inequalities for $W_{\alpha,\beta}(z)$, we need the Fox--Wright function ${}_p\Psi_q(z)$ defined by
\begin{equation}\label{LLLL}
{}_p\Psi_q\Big[^{(a_1,\alpha_1),...,(a_p,\alpha_p)}_{(b_1,\beta_1),...,(b_q,\beta_q)}\Big|z\Big]=\sum_{k=0}^\infty\frac{\prod_{i=1}^p\Gamma(a_j+\alpha_j k)}{\prod_{j=1}^q\Gamma(b_j+\beta_j k)}\frac{z^k}{k!},
\end{equation}
where $z,a_i,b_j\in\mathbb{C},\;\alpha_i,\beta_j\in\mathbb{R}$ for $i\in\{1,...,p\}$ and $j\in\{1,...,q\}.$ The series (\ref{LLLL}) converges absolutely and uniformly for all bounded $|z|,\;z\in\mathbb{C}$ when
$$1+\sum_{j=1}^q\beta_j-\sum_{i=1}^p\alpha_i>0.$$

We note that the inequality (\ref{777}) in the next Theorem complements and improve the inequality (\ref{23}).

\begin{theorem} \label{T3}Let $\beta>\alpha>0.$ The following inequalities hold true:
\begin{equation}\label{666}
\left(\frac{\Gamma(\alpha)}{\Gamma(\beta)}\right).e^{\frac{\Gamma(2\alpha)\Gamma(\beta)}{\Gamma(\alpha)\Gamma(\alpha+\beta)}|z|}\leq{}_1\Psi_1\Big[^{\;(\alpha,\alpha)\;}_{\;(\beta,\alpha)\;}\;\Big|z\Big]\leq \left(\frac{\Gamma(\alpha)}{\Gamma(\beta)}\right)-\left(\frac{\Gamma(2\alpha)(1-e^{|z|})}{\Gamma(\beta+\alpha)}\right),\;z\in\mathbb{R},
\end{equation} 
\begin{equation}\label{6666}
 W_{\alpha,\beta}(z)\leq \left(\frac{1}{\Gamma(\beta)}\right)-\left(\frac{\Gamma(2\alpha)(1-e^{\frac{\Gamma(\alpha)z}{\Gamma(2\alpha)}})}{\Gamma(\alpha)\Gamma(\beta+\alpha)}\right),\;z>0,
\end{equation}
\begin{equation}\label{777}
 \check{W}_{\alpha,\beta}(z)\geq\frac{e^{\frac{\Gamma(\beta)}{\Gamma(\alpha+\beta)}z}}{\Gamma(\beta)},\;0<z<1.
\end{equation}
\end{theorem}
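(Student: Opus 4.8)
The plan is to obtain all three inequalities from a single integral representation of the Fox--Wright function that mirrors Lemma \ref{l1}. First I would show
\[
{}_1\Psi_1\Big[^{\;(\alpha,\alpha)\;}_{\;(\beta,\alpha)\;}\;\Big|z\Big]=c_{\alpha,\beta}\int_0^1(1-t^{1/\alpha})^{\beta-\alpha-1}e^{zt}\,dt,
\]
which follows by writing $\frac{\Gamma(\alpha k+\alpha)}{\Gamma(\alpha k+\beta)}=\frac{B(\alpha k+\alpha,\beta-\alpha)}{\Gamma(\beta-\alpha)}$, inserting the Beta integral, interchanging summation and integration, and substituting $t=s^{\alpha}$, exactly as in the proof of Lemma \ref{l1}; the sole difference from (\ref{1}) is that $W_{\alpha,\alpha}(zt)$ is replaced by $e^{zt}$. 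The same Beta evaluation shows that the moments of the weight are $\int_0^1 t^{k}c_{\alpha,\beta}(1-t^{1/\alpha})^{\beta-\alpha-1}\,dt=\frac{\Gamma(\alpha k+\alpha)}{\Gamma(\alpha k+\beta)}$; in particular the total mass equals $\frac{\Gamma(\alpha)}{\Gamma(\beta)}$ and the first moment equals $\frac{\Gamma(2\alpha)}{\Gamma(\alpha+\beta)}$, whose quotient is the constant appearing in the exponent of (\ref{666}).

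To prove (\ref{666}) I would read the representation as $\frac{\Gamma(\alpha)}{\Gamma(\beta)}$ times the expectation of $t\mapsto e^{zt}$ against the probability density $\frac{\Gamma(\beta)}{\Gamma(\alpha)}c_{\alpha,\beta}(1-t^{1/\alpha})^{\beta-\alpha-1}$, whose mean is $\frac{\Gamma(2\alpha)\Gamma(\beta)}{\Gamma(\alpha)\Gamma(\alpha+\beta)}$. Jensen's inequality for the convex map $t\mapsto e^{zt}$ gives the lower bound, while the chord estimate $e^{zt}\le 1+t(e^{z}-1)$ on $[0,1]$ (again convexity) gives, after integrating term by term, the upper bound $\frac{\Gamma(\alpha)}{\Gamma(\beta)}-\frac{\Gamma(2\alpha)(1-e^{z})}{\Gamma(\beta+\alpha)}$; both coincide with (\ref{666}) for $z>0$, where $|z|=z$.

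Inequality (\ref{6666}) then follows by feeding the known estimate $W_{\alpha,\alpha}(zt)\le \frac{1}{\Gamma(\alpha)}e^{\frac{\Gamma(\alpha)}{\Gamma(2\alpha)}zt}$ --- inequality (\ref{L}) with $\beta=\alpha$ --- into the representation (\ref{1}), which yields $W_{\alpha,\beta}(z)\le \Gamma(\alpha)^{-1}\,{}_1\Psi_1\big[\cdots\big|\,\tfrac{\Gamma(\alpha)}{\Gamma(2\alpha)}z\big]$; applying the upper half of (\ref{666}) at this scaled argument and collecting the Gamma factors reproduces (\ref{6666}) exactly.

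For (\ref{777}) I would run the mirror-image reduction. Using the representation (\ref{1}) for $\check{W}_{\alpha,\beta}$ together with a lower estimate of the type (\ref{23}) at the base level, $\check{W}_{\alpha,\alpha}(zt)\ge \frac{1}{\Gamma(\alpha)}e^{-\frac{\Gamma(\alpha)}{\Gamma(2\alpha)}zt}$ for $0<zt<1$ (the case $\beta=\alpha$, obtained by the same complete-monotonicity reasoning of Theorem \ref{T1}), gives $\check{W}_{\alpha,\beta}(z)\ge \Gamma(\alpha)^{-1}\,{}_1\Psi_1\big[\cdots\big|-\tfrac{\Gamma(\alpha)}{\Gamma(2\alpha)}z\big]$. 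Inserting this negative argument into the lower half of (\ref{666}) in its two-sided form $\frac{\Gamma(\alpha)}{\Gamma(\beta)}\,e^{\frac{\Gamma(2\alpha)\Gamma(\beta)}{\Gamma(\alpha)\Gamma(\alpha+\beta)}|w|}$ and simplifying --- the composite exponent telescopes to $\frac{\Gamma(\beta)}{\Gamma(\alpha+\beta)}z$ and the prefactor to $\frac{1}{\Gamma(\beta)}$ --- produces precisely (\ref{777}). I expect this last step to be the main obstacle: the convexity/Jensen argument of the second paragraph delivers only the one-sided bound $\frac{\Gamma(\alpha)}{\Gamma(\beta)}e^{\frac{\Gamma(2\alpha)\Gamma(\beta)}{\Gamma(\alpha)\Gamma(\alpha+\beta)}z}$, which at the negative argument $w=-\tfrac{\Gamma(\alpha)}{\Gamma(2\alpha)}z$ reproduces merely the weaker estimate (\ref{23}). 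Securing the absolute-value form of the lower bound in (\ref{666}) --- exactly the feature by which (\ref{777}) would improve on (\ref{23}) --- is where the delicate control of the sign of the exponent must be concentrated, the more so as $\check{W}_{\alpha,\beta}$ is itself decreasing by the complete monotonicity established in Theorem \ref{T1}; this is the part of the argument that must carry the full weight of the claim.
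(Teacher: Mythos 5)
Your integral representation
\begin{equation*}
{}_1\Psi_1\Big[^{\;(\alpha,\alpha)\;}_{\;(\beta,\alpha)\;}\;\Big|z\Big]=c_{\alpha,\beta}\int_0^1(1-t^{1/\alpha})^{\beta-\alpha-1}e^{zt}\,dt
\end{equation*}
is correct, and it gives a genuinely different and more self-contained route to (\ref{666}) than the paper's: the paper quotes the two-sided Pog\'any--Srivastava bound \cite{PS} and must verify the hypotheses $\psi_1>\psi_2$ and $\psi_1^2<\psi_0\psi_2$ (the latter via a digamma computation), whereas in your formulation $\psi_k=\int_0^1 t^k\,d\nu(t)$ are moments of a positive measure, so $\psi_1^2\leq\psi_0\psi_2$ is automatic by Cauchy--Schwarz, and Jensen plus the chord bound $e^{zt}\leq 1-t+te^{z}$ replace the citation entirely. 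Your derivations of (\ref{6666}) and the reduction of (\ref{777}) to the left half of (\ref{666}) at the negative argument $-\Gamma(\alpha)z/\Gamma(2\alpha)$ coincide exactly with the paper's proof.

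However, the obstacle you flagged in your last paragraph is not a technical difficulty to be overcome: it is an actual falsity, and your own representation proves this. For $z<0$ one has $e^{zt}\leq 1$ on $[0,1]$, hence ${}_1\Psi_1\big[\cdots\big|z\big]\leq\psi_0=\Gamma(\alpha)/\Gamma(\beta)<\psi_0\,e^{\psi_1\psi_0^{-1}|z|}$, so the absolute-value form of the lower bound in (\ref{666}) fails for every negative argument; only your Jensen bound, with signed exponent $\frac{\Gamma(2\alpha)\Gamma(\beta)}{\Gamma(\alpha)\Gamma(\alpha+\beta)}z$, survives. Consequently (\ref{777}) is false as well, which is independently visible from Theorem \ref{T1}: $\check{W}_{\alpha,\beta}$ is completely monotonic, hence decreasing on $(0,1)$ with value $1/\Gamma(\beta)$ at $0^+$, while the right-hand side of (\ref{777}) strictly exceeds $1/\Gamma(\beta)$ for $z>0$. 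The paper's proof of (\ref{777}) consists precisely in applying the quoted $|x|$-form of the lower bound at the negative argument --- the very step you isolated as having to ``carry the full weight of the claim'' --- and is unsound there; your negative-argument Jensen estimate, which recovers only (\ref{23}), is the correct conclusion. In summary, your proposal validly establishes the upper bound in (\ref{666}) for all real $z$, the lower bound with $z$ in place of $|z|$, and (\ref{6666}); the remaining claims (the $|z|$-form of the lower bound for $z<0$, and (\ref{777})) cannot be proved by any completion of your argument because they are false, and your analysis pinpoints exactly where the paper's own proof goes wrong.
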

\begin{proof}
We recall that Pog\'any and Srivastava \cite[Theorem 4]{PS} and \cite[eq. (22)]{PS} which say that for all $_p\Psi_q$ satisfying
\begin{equation}\label{555}
\psi_1>\psi_2\;\;\textrm{and}\;\;\psi_1^2<\psi_0\psi_2,
\end{equation}
the  two--sided inequality
\begin{equation}\label{ùù}
\psi_0 e^{\psi_1\psi_0^{-1}|x|}\leq{}_p\Psi_q\left[_{(b_1,\beta_1),...,(b_q,\beta_q)}^{(a_1,\alpha_1),...,(a_p,\alpha_p)}\Big|x\right]\leq \psi_0-(1-e^{|x|})\psi_1,
\end{equation}
holds true for all $x\in\mathbb{R}.$ Here
$$\psi_m=\frac{\prod_{j=1}^p\Gamma(a_j+\alpha_j m)}{\prod_{j=1}^q\Gamma(b_j+\beta_j m)},\;j\in\left\{1,2,3\right\}.$$
In our case, we have $$\psi_0=\frac{\Gamma(\alpha)}{\Gamma(\beta)},\;\;\psi_1=\frac{\Gamma(2\alpha)}{\Gamma(\beta+\alpha)}\;\textrm{and}\;\psi_2=\frac{\Gamma(3\alpha)}{\Gamma(\beta+2\alpha)}.$$
On the other hand, Due to log--convexity property of the Gamma function $\Gamma(z)$, the ratios $z\mapsto\Gamma(z+a)/\Gamma(z)$ is increasing on $(0,\infty)$ when $a>0$. Thus implies that the following inequality:
\begin{equation}
\frac{\Gamma(z+a)}{\Gamma(z)}\leq \frac{\Gamma(z+a+b)}{\Gamma(z+b)},
\end{equation}
holds for all $a,b,z>0.$ Letting $z=2\alpha,\;a=\alpha$ and $b=\beta-\alpha>0$ in (\ref{55}) we get $\psi_1>\psi_2.$ This proves the left--hand side of inequality (\ref{555}). Now, we consider the function $f:(0,\infty)\longrightarrow\mathbb{R}$ defined by:
$$f_\alpha(z)=\frac{\Gamma(z)\Gamma(z+2\alpha)}{\Gamma^2(\alpha+z)}.$$
Thus,
\begin{equation}\label{yy}
\frac{f^\prime_\alpha(z)}{f_\alpha(z)}=\psi(z)+\psi(z+2\alpha)-2\psi(z+\alpha),
\end{equation}
where $\psi(z)=\Gamma^\prime(z)/\Gamma(z)$ is the Euler digamma function. By using the Legendre's formula
$$\psi(z)=-\gamma+\int_0^1\frac{t^{x-1}-1}{t-1}dt,$$
where $\gamma$ is the Euler--Mascheroni constant, we have 
\begin{equation}\label{yyy}
\frac{f^\prime_\alpha(z)}{f_\alpha(z)}=\int_0^1\frac{t^{x-1}}{t-1}g_\alpha(t)dt,
\end{equation}
where $g_\alpha(t)=1+t^{2\alpha}-2t^\alpha,\;t\in[0,1].$ Thus $g^\prime_\alpha(t)=2\alpha t^{\alpha-1}(t^\alpha-1)\leq 0,$ for all $t\in[0,1],$ consequently the function $t\mapsto g_\alpha(t)$ is decreasing on $[0,1]$ and satisfies $g_\alpha(0)=1$ and $g_\alpha(1)=0.$ So, the function $z\mapsto f_\alpha(z)$ is decreasing on $(0,\infty).$ In particular $f_\alpha(\beta)\leq f_\alpha(\alpha)$, which implies the right hand side of (\ref{555}). Then,
\begin{equation}
\left(\frac{\Gamma(\alpha)}{\Gamma(\beta)}\right).e^{\frac{\Gamma(2\alpha)\Gamma(\beta)}{\Gamma(\alpha)\Gamma(\alpha+\beta)}z}\leq{}_1\Psi_1\Big[^{\;(\alpha,\alpha)\;}_{\;(\beta,\alpha)\;}\;\Big|z\Big]\leq \left(\frac{\Gamma(\alpha)}{\Gamma(\beta)}\right)-\left(\frac{\Gamma(2\alpha)(1-e^{|z|}}{\Gamma(\beta+\alpha)}\right)
\end{equation} 
for all $z\in\mathbb{R}.$ Now, we prove the inequality (\ref{6666}) From the integral representation (\ref{1}) and (\ref{L}), we have
\begin{equation}\label{---}
\begin{split}
 W_{\alpha,\beta}(z)&\leq \frac{c_{\alpha,\beta}}{\Gamma(\alpha)}\int_0^1 (1-t^{1/\alpha})^{\beta-\alpha-1} e^{\frac{\Gamma(\alpha)}{\Gamma(2\alpha)}zt}dt\\
&=\frac{c_{\alpha,\beta}}{\Gamma(\alpha)}\sum_{n=0}^\infty\frac{(\Gamma(\alpha)/\Gamma(2\alpha)z)^n}{n!}\int_0^1 (1-t^{1/\alpha})^{\beta-\alpha-1} t^n dt\\
&=\frac{\alpha c_{\alpha,\beta}}{\Gamma(\alpha)}\sum_{n=0}^\infty\frac{(\Gamma(\alpha)/\Gamma(2\alpha)z)^n}{n!}\int_0^1 (1-t)^{\beta-\alpha-1} t^{\alpha n+\alpha-1} dt\\
&=\frac{\alpha c_{\alpha,\beta}}{\Gamma(\alpha)}\sum_{n=0}^\infty\frac{B(\beta-\alpha,\alpha n+\alpha)(\Gamma(\alpha)/\Gamma(2\alpha)z)^n}{n!}\\
&=\frac{1}{\Gamma(\alpha)}\sum_{n=0}^\infty\frac{\Gamma(\alpha n+\alpha)(\Gamma(\alpha)/\Gamma(2\alpha)z)^n}{n!\Gamma(\alpha n+\beta)}\\
&=\frac{1}{\Gamma(\alpha)}{}_1\Psi_1\Big[^{\;(\alpha,\alpha)\;}_{\;(\beta,\alpha)\;}\;\Big|\frac{\Gamma(\alpha)}{\Gamma(2\alpha)}z\Big].
\end{split}
\end{equation}
So, by the right hand side of inequality (\ref{666}) and (\ref{---}) we deduce that the inequality (\ref{6666}) holds true for all $z>0.$  Similar arguments would lead us to proved the inequality (\ref{777}). By means of the integral representation (\ref{1}) and the inequality (\ref{23}) we have  
\begin{equation}\label{7777}
\begin{split}
 W_{\alpha,\beta}(z)&\geq \frac{c_{\alpha,\beta}}{\Gamma(\alpha)}\int_0^1 (1-t^{1/\alpha})^{\beta-\alpha-1} e^{-\frac{\Gamma(\alpha)}{\Gamma(2\alpha)}zt}dt\\
&=\frac{c_{\alpha,\beta}}{\Gamma(\alpha)}\sum_{n=0}^\infty\frac{(-\Gamma(\alpha)/\Gamma(2\alpha)z)^n}{n!}\int_0^1 (1-t^{1/\alpha})^{\beta-\alpha-1} t^n dt\\
&=\frac{1}{\Gamma(\alpha)}{}_1\Psi_1\Big[^{\;(\alpha,\alpha)\;}_{\;(\beta,\alpha)\;}\;\Big|-\frac{\Gamma(\alpha)}{\Gamma(2\alpha)}z\Big].
\end{split}
\end{equation}
 Combining the left hand side of inequality (\ref{666}) and (\ref{7777}) we obtain the inequality (\ref{777}). This evidently completes the proof of Theorem \ref{T3}.
\end{proof}
\section{The generalized Wright functions: Monotonicity patterns and functional inequalities}
 In \cite{AS}, the authors introduced the definition of the generalized Wright function $W_{\alpha,\beta}^{\gamma,\sigma}(z)$:
\begin{equation}\label{def}
W_{\alpha,\beta}^{\gamma,\sigma}(z)=\sum_{n=0}^\infty\frac{(\gamma)_n}{(\sigma)_n\Gamma(\alpha n+\beta)}\frac{z^n}{n!},\;\alpha\in\mathbb{R},\beta,\gamma,\sigma,z\in\mathbb{C},
\end{equation}
where 
$$(\tau)_n=\frac{\Gamma(\tau+n)}{\Gamma(\tau)}=\tau(\tau+1)...(\tau+n-1),$$
is a Pochhammer symbol. The function $W_{\alpha,\beta}^{\gamma,\sigma}(z)$ is an entire function of order $1/(1+\alpha)$ and has the following integral representation \cite[Theorem 2, eq. (34)]{AS} 
\begin{equation}\label{&&&}
W_{\alpha,\beta}^{\gamma,\sigma}(z)=\frac{\Gamma(\sigma)}{\Gamma(\gamma)\Gamma(\sigma-\gamma)}\int_0^1 t^{\gamma-1}(1-t)^{\sigma-\gamma-1}W_{\alpha,\beta}(zt)dt,
\end{equation}
where $\alpha>-1, \beta, \gamma,\sigma,z\in\mathbb{C}$ and $\Re(\sigma)>\Re(\gamma)>0.$
\begin{theorem}\label{T4}The following assertions are true:\\
\textbf{a.} The function $z\mapsto \hat{W}_{\alpha,\beta}^{\gamma,\sigma}(z)$ is completely monotonic and log--convex on $(0,1),$ for all $\alpha,\gamma,\sigma>0$ such that $\beta>\alpha>x^\star$ and $\sigma>\gamma.$ Moreover, the following inequalities holds true:
\begin{equation}\label{ZZZ0}
\check{W}_{\alpha,\beta}^{\gamma,\sigma}(x+y)\geq\frac{\check{W}_{\alpha,\beta}^{\gamma,\sigma}(x)\check{W}_{\alpha,\beta}^{\gamma,\sigma}(y)}{\Gamma(\beta)},\;\;0<x+y<1,
\end{equation}
\begin{equation}\label{ZZZ1}
\frac{\gamma+1}{\sigma+1}\hat{W}_{\alpha,\beta+2\alpha}^{\gamma+2,\sigma+2}(z)\hat{W}_{\alpha,\beta}^{\gamma,\sigma}(z)-\frac{\gamma}{\sigma}.(\hat{W}_{\alpha,\beta+\alpha}^{\gamma+1,\sigma+1}(z))^2\geq 0,\;0<z<1,
\end{equation}
\begin{equation}\label{ZZZ2}
\check{W}_{\alpha,\beta}^{\gamma,\sigma}(z)\geq \frac{e^{-\frac{\gamma\Gamma(\beta)}{\sigma\Gamma(\beta+\alpha)}z}}{\Gamma(\beta)},\;z\in(0,1).
\end{equation}
\textbf{b.} The function $\sigma\mapsto W_{\alpha,\beta}^{\gamma,\sigma}(z)$ is log--convex on $(0,\infty).$ Moreover, the following Tur\'an type inequality 
\begin{equation}\label{ZZZ3}
W_{\alpha,\beta}^{\gamma,\sigma}(z)W_{\alpha,\beta}^{\gamma,\sigma+2}(z)-\Big(W_{\alpha,\beta}^{\gamma,\sigma+1}(z)\Big)^2\geq 0.
\end{equation}
\end{theorem}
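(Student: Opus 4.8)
The plan is to reduce everything to the classical Wright function through the integral representation (\ref{&&&}) together with Theorem \ref{T1}, and to exploit the differentiation rule for the generalized Wright function. The first thing I would record is that differentiating the series (\ref{def}) termwise and shifting the summation index gives
$$\frac{d}{dz}W_{\alpha,\beta}^{\gamma,\sigma}(z)=\frac{\gamma}{\sigma}\,W_{\alpha,\beta+\alpha}^{\gamma+1,\sigma+1}(z),$$
since $(\gamma)_{n+1}=\gamma(\gamma+1)_n$ and $(\sigma)_{n+1}=\sigma(\sigma+1)_n$. Iterating once more yields the second-derivative formula with prefactor $\tfrac{\gamma(\gamma+1)}{\sigma(\sigma+1)}$ and parameters $(\gamma+2,\sigma+2,\beta+2\alpha)$. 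These two identities will drive the Tur\'an inequality (\ref{ZZZ1}) and the exponential bound (\ref{ZZZ2}). Throughout I read $\hat{W}_{\alpha,\beta}^{\gamma,\sigma}$ and $\check{W}_{\alpha,\beta}^{\gamma,\sigma}$ as the same object $W_{\alpha,\beta}^{\gamma,\sigma}(-z)$, the two decorations being used interchangeably in the statement.

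For part a, I would differentiate (\ref{&&&}) under the integral sign $n$ times in the variable $-z$, which brings down a factor $t^n$ and leaves $(-1)^n$ times the $n$-th derivative of $\check{W}_{\alpha,\beta}(zt)$. Since $0<z<1$ and $0<t<1$ force $0<zt<1$, Theorem \ref{T1} guarantees $(-1)^n\check{W}_{\alpha,\beta}^{(n)}(zt)\geq0$, while the weight $t^{\gamma-1}(1-t)^{\sigma-\gamma-1}$ and the positive prefactor are nonnegative under $\gamma>0,\ \sigma>\gamma$. Hence $\check{W}_{\alpha,\beta}^{\gamma,\sigma}$ is completely monotonic on $(0,1)$, and therefore log-convex by \cite[p.167]{WI}. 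The super-additivity (\ref{ZZZ0}) then follows exactly as in Theorem \ref{T1}: since $\check{W}_{\alpha,\beta}^{\gamma,\sigma}(0)=1/\Gamma(\beta)$, the normalized function $\Gamma(\beta)\check{W}_{\alpha,\beta}^{\gamma,\sigma}$ maps into $(0,1)$ and is completely monotonic, so Kimberling's theorem \cite{KI} applies. The Tur\'an inequality (\ref{ZZZ1}) is read off from log-convexity, namely from $(\check{W}_{\alpha,\beta}^{\gamma,\sigma})''\,\check{W}_{\alpha,\beta}^{\gamma,\sigma}-\big((\check{W}_{\alpha,\beta}^{\gamma,\sigma})'\big)^2\geq0$; substituting the first- and second-derivative formulas and clearing the common factor $\gamma/\sigma$ produces precisely (\ref{ZZZ1}). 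Finally, (\ref{ZZZ2}) comes from the monotone form of l'Hospital's rule \cite{AN} applied to $F(z)=\log\big(\Gamma(\beta)\check{W}_{\alpha,\beta}^{\gamma,\sigma}(z)\big)$ and $G(z)=z$, using $F(0)=0$ and $F'(0)=-\gamma\Gamma(\beta)/\big(\sigma\Gamma(\beta+\alpha)\big)$.

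For part b, the $\sigma$-dependence of a single term of (\ref{def}) is carried only by $1/(\sigma)_n=\Gamma(\sigma)/\Gamma(\sigma+n)$. I would show this is log-convex in $\sigma$ by computing
$$\frac{d^2}{d\sigma^2}\log\frac{\Gamma(\sigma)}{\Gamma(\sigma+n)}=\psi'(\sigma)-\psi'(\sigma+n)\geq0,$$
which holds because the trigamma function $\psi'$ is decreasing. Thus for $z>0$ every summand of (\ref{def}) is log-convex in $\sigma$, and since a (locally uniformly convergent) sum of log-convex functions is log-convex, $\sigma\mapsto W_{\alpha,\beta}^{\gamma,\sigma}(z)$ is log-convex on $(0,\infty)$. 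Evaluating the log-convexity inequality at the midpoint $\sigma+1=\tfrac12\sigma+\tfrac12(\sigma+2)$ and squaring yields the Tur\'an inequality (\ref{ZZZ3}).

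The main obstacle I anticipate is bookkeeping rather than conceptual: one must track the Pochhammer prefactors $\gamma/\sigma$ and $\gamma(\gamma+1)/\big(\sigma(\sigma+1)\big)$ carefully so that they survive into the exact normalization demanded by (\ref{ZZZ1}) and (\ref{ZZZ2}). A secondary delicate point, inherited from Theorem \ref{T1}, is that Kimberling's criterion is phrased on $(0,\infty)$ whereas complete monotonicity is established only on $(0,1)$; I would handle (\ref{ZZZ0}) exactly as the paper does for (\ref{21}), restricting to $0<x+y<1$. The remaining verifications (justifying differentiation under the integral sign, the sign of the Beta-type weight, and the trigamma monotonicity) are routine.
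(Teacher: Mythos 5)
Your proposal is correct and follows essentially the same route as the paper: complete monotonicity and log-convexity via the integral representation (\ref{&&&}) combined with Theorem \ref{T1}, Kimberling's theorem for (\ref{ZZZ0}), the differentiation formula $\frac{d}{dz}W_{\alpha,\beta}^{\gamma,\sigma}=\frac{\gamma}{\sigma}W_{\alpha,\beta+\alpha}^{\gamma+1,\sigma+1}$ together with log-convexity (equivalently $(\hat{W}'/\hat{W})'\geq0$, which is your $\hat{W}''\hat{W}-(\hat{W}')^2\geq0$) for (\ref{ZZZ1}), the monotone l'Hospital rule for (\ref{ZZZ2}), and the trigamma/midpoint argument for part b. The only cosmetic differences are that you derive the differentiation formula from the series rather than citing it, and you make explicit the differentiation under the integral sign and the $(0,1)$-versus-$(0,\infty)$ caveat in Kimberling's criterion that the paper glosses over.
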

\begin{proof}\textbf{a.} From Theorem \ref{T1} and integral representation of the generalized Wright function $W_{\alpha,\beta}^{\gamma,\sigma}(z)$, we deduce that the function $z\mapsto \check{W}_{\alpha,\beta}^{\gamma,\sigma}(z)$ is completely monotonic on $(0,1)$ and consequently is log--convex. Again using the Kimberling's result, we obtain the inequality (\ref{ZZZ0}). Now, we prove the inequality (\ref{ZZZ1}). Since the function $z\mapsto \hat{W}_{\alpha,\beta}^{\gamma,\sigma}(z)$ is log--convex on $(0,1)$ we have  $z\mapsto (\hat{W}_{\alpha,\beta}^{\gamma,\sigma}(z))^\prime/\hat{W}_{\alpha,\beta}^{\gamma,\sigma}(z)$  is increasing on $(0,1)$. So, by using the differentiation formula \cite[Theorem  19]{AS}
\begin{equation}
\frac{d}{dz}W_{\alpha,\beta}^{\gamma,\sigma}(z)=\frac{\gamma}{\sigma}W_{\alpha,\beta+\alpha}^{\gamma+1,\sigma+1}(z),
\end{equation}
we get
\begin{equation}
\left(\frac{(\hat{W}_{\alpha,\beta}^{\gamma,\sigma}(z))^\prime}{\hat{W}_{\alpha,\beta}^{\gamma,\sigma}(z)}\right)^\prime=\frac{\frac{\gamma(\gamma+1)}{\sigma(\sigma+1)}\hat{W}_{\alpha,\beta+2\alpha}^{\gamma+2,\sigma+2}(z)\hat{W}_{\alpha,\beta}^{\gamma,\sigma}(z)-\frac{\gamma^2}{\sigma^2}.(\hat{W}_{\alpha,\beta+\alpha}^{\gamma+1,\sigma+1}(z))^2}{(\hat{W}_{\alpha,\beta}^{\gamma,\sigma}(z))^2}\geq0,
\end{equation}
which can be derived easily the inequality (\ref{ZZZ1}). Now, we prove the inequality (\ref{ZZZ2}). Let $F_1(x)=\log\Big[\Gamma(\beta)\hat{W}_{\alpha,\beta}^{\gamma,\sigma}(z)\Big]$ and $G_1(x)=x.$ Again by using the  monotone form of l'Hospital's rule, we deduce that the function $F_1(x)/G_1(x)=(F_1(x)-F_1(0))/(G_1(x)-G_1(0))$ is increasing on $(0,1),$ and consequently 
$$\lim_{x\longrightarrow 0}\frac{F_1(x)}{G_1(x)}=-\frac{\gamma\Gamma(\beta)}{\sigma\Gamma(\beta+\alpha)},$$
which completes the proof of inequality (\ref{ZZZ2}).\\
\textbf{b.} For convenience, let us write $A_n(\sigma)=\frac{(\gamma)_n}{(\sigma)_n n!\Gamma(\alpha n+\beta)}.$ Since the function $\psi^\prime$ is completely monotonic on $(0,\infty)$ we get
$$\partial^2 [\log A_n(\sigma)]/\partial\sigma^2=\psi^\prime(\sigma)-\psi^\prime(\sigma+n)\geq0,$$
for all $n\geq0.$ So, using  the fact that sums of log--convex functions are log--convex too, we deduce that the function $\sigma\mapsto W_{\alpha,\beta}^{\gamma,\sigma}(z)$ is log--convex on $(0,\infty),$ for $z>0.$ Now, focus the Tur\'an type inequality (\ref{ZZZ3}). Since $\sigma\mapsto W_{\alpha,\beta}^{\gamma,\sigma}(z)$ is log--convex on $(0,\infty)$ for $z>0,$ it follows that for $\sigma_1,\sigma_2>0,\;t\in[0,1],$ we have
$$W_{\alpha,\beta}^{\gamma,t\sigma_1+(1-t)\sigma_2}(z)\leq \Big[W_{\alpha,\beta}^{\gamma,\sigma_1}(z)\Big]^t \Big[W_{\alpha,\beta}^{\gamma,\sigma}(z)\Big]^{1-t}.$$
Choosing $\sigma_1=\sigma,\;\sigma_2=\sigma+2$ and $t=1/2$,  the above inequality reduces to the Tur\'an type inequality (\ref{ZZZ3}). The proof of Theorem \ref{T4} is thus completed.
\end{proof}

\begin{theorem}\label{tèè}Let $\beta,\alpha,\sigma>0$ and $\gamma>0$. Then, the following Tur\'an type inequality  
\begin{equation}
 W_{\alpha,\beta}^{\gamma,\sigma}(z) W_{\alpha,\beta}^{\gamma+2,\sigma}(z)-\frac{\gamma}{\gamma+1} \Big(W_{\alpha,\beta}^{\gamma+1,\sigma}(z)\Big)^2\geq0,
\end{equation}
hold true for all $z>0.$
\end{theorem}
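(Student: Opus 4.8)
The plan is to argue directly at the level of Taylor coefficients rather than through a log-convexity statement in the parameter $\gamma$. Log-convexity in $\gamma$ is in fact \emph{not} available: writing the coefficients as $\frac{(\gamma)_n}{(\sigma)_n\,n!\,\Gamma(\alpha n+\beta)}$, one computes $\partial^2[\log (\gamma)_n]/\partial\gamma^2=\psi^\prime(\gamma+n)-\psi^\prime(\gamma)\le 0$ because $\psi^\prime$ is decreasing, so $\gamma\mapsto W_{\alpha,\beta}^{\gamma,\sigma}(z)$ is actually log-\emph{concave}; that route would only give the reverse inequality with constant $1$. The reduced constant $\gamma/(\gamma+1)<1$ in the statement is exactly what makes the desired \emph{lower} bound compatible with log-concavity, and it should be reachable by a Cauchy-product comparison of coefficients.

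Concretely, I would set $d_n=\frac{1}{(\sigma)_n\,n!\,\Gamma(\alpha n+\beta)}>0$, so that the three series are $\sum_{n\ge0}(\gamma)_n d_n z^n$, $\sum_{n\ge0}(\gamma+1)_n d_n z^n$ and $\sum_{n\ge0}(\gamma+2)_n d_n z^n$. Since $\alpha,\beta,\gamma,\sigma>0$ all coefficients are positive, so for $z>0$ it suffices to prove the coefficient-wise bound coming from the Cauchy products: for each $m\ge0$,
\[
\sum_{k=0}^m (\gamma)_k(\gamma+2)_{m-k}\,d_k d_{m-k}\ \ge\ \frac{\gamma}{\gamma+1}\sum_{k=0}^m (\gamma+1)_k(\gamma+1)_{m-k}\,d_k d_{m-k}.
\]
Pairing the index $k$ with $m-k$ (this symmetric pairing also correctly covers the diagonal term $k=m-k$, where both sides acquire the same factor), the whole matter reduces to the single pairwise inequality
\[
(\gamma)_i(\gamma+2)_j+(\gamma)_j(\gamma+2)_i\ \ge\ \frac{2\gamma}{\gamma+1}\,(\gamma+1)_i(\gamma+1)_j,\qquad i,j\ge0.
\]

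The remaining step is the algebraic check of this pairwise bound, which I expect to be the only computational point. Using the elementary ratios $(\gamma+1)_n/(\gamma)_n=(\gamma+n)/\gamma$ and $(\gamma+2)_n/(\gamma)_n=(\gamma+n)(\gamma+n+1)/[\gamma(\gamma+1)]$, I would divide through by $(\gamma)_i(\gamma)_j$ and clear the factor $\gamma(\gamma+1)$; setting $x=\gamma+i$ and $y=\gamma+j$, the inequality collapses to
\[
x(x+1)+y(y+1)\ \ge\ 2xy,\qquad\textrm{equivalently}\qquad (x-y)^2+(x+y)\ \ge\ 0,
\]
which is trivially true since $x=\gamma+i>0$ and $y=\gamma+j>0$. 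This establishes the pairwise inequality, hence the coefficient-wise inequality, and then positivity of the coefficients together with $z>0$ yields the asserted Tur\'an type inequality. The main conceptual obstacle is realizing that the log-convexity argument used in Theorem \ref{T4} fails in the $\gamma$ variable, so one must descend to the Cauchy product and find the correct symmetric pairing; once the reduction to $(x-y)^2+(x+y)\ge0$ is made, the estimate is immediate.
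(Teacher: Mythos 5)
Your proof is correct and follows essentially the same route as the paper: the paper likewise expands the difference by Cauchy products --- working with the normalization $\textsc{K}(\gamma)=\Gamma(\gamma)W_{\alpha,\beta}^{\gamma,\sigma}(z)/\Gamma(\sigma)$, so that the constant $\gamma/(\gamma+1)$ is absorbed and the claim becomes $\textsc{K}(\gamma)\textsc{K}(\gamma+2)\geq \textsc{K}^2(\gamma+1)$ --- then pairs the indices $j$ and $k-j$ exactly as you do, and the paired terms reduce to the same quadratic fact, namely $(k-2j)^2+(2\gamma+k)\geq 0$, which is your $(x-y)^2+(x+y)\geq0$ with $x=\gamma+j$, $y=\gamma+k-j$. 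One small caveat on your opening remark: termwise log-concavity of $\gamma\mapsto(\gamma)_n$ does not imply log-concavity of the sum (sums of log-concave functions need not be log-concave), so the claim that $\gamma\mapsto W_{\alpha,\beta}^{\gamma,\sigma}(z)$ is log-concave is unjustified as stated --- but this is only motivation and plays no role in your actual argument.
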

\begin{proof}For convenience, let us write $\textsc{K}(\gamma)=\frac{\Gamma(\gamma)}{\Gamma(\sigma)}W_{\alpha,\beta}^{\gamma,\sigma}(z).$ By applying the Cauchy product, we find that
\begin{equation}
\textsc{K}^2(\gamma+1)-\textsc{K}(\gamma)\textsc{K}(\gamma+2)=\sum_{k=0}^\infty\sum_{j=0}^k \delta_{j,k}T_{j,k}z^k,
\end{equation}
where $T_{j,k}=((2j-k)-1)\Gamma(\gamma+j)\Gamma(\gamma+(k-j)+1)$ and $\delta_{j,k}=1/(j!(k-j)!\Gamma(\sigma+j)\Gamma(\sigma+k-j)\Gamma(\alpha j+\beta)\Gamma(\alpha (k-j)+\beta)).$ If $k$ is even, then
\begin{equation*}
\begin{split}
 \sum_{j=0}^k \delta_{j,k}T_{j,k}&=\sum_{j=0}^{k/2-1} \delta_{j,k}T_{j,k}+\sum_{j=0}^{k/2+1} \delta_{j,k}T_{j,k}+\delta_{\frac{k}{2},k}T_{\frac{k}{2},k}\\
&=\sum_{j=0}^{k/2-1} \delta_{j,k}T_{j,k}+\sum_{j=0}^{k/2-1} \delta_{j,k}T_{k-j,k}+\delta_{\frac{k}{2},k}T_{\frac{k}{2},k}\\
&=\sum_{j=0}^{[(k-1)/2]} \delta_{j,k}\Big(T_{j,k}+T_{k-j,k}\Big)-\delta_{\frac{k}{2},k}\Gamma(\gamma+k/2)\Gamma(\gamma+k/2+1),
\end{split}
\end{equation*}
where, as usual, $[k]$ denotes the greatest integer part of $k\in\mathbb{R}.$  Similarly, if $k$ is odd, then
 $$\sum_{j=0}^k \delta_{j,k}T_{j,k}=\sum_{j=0}^{[(k-1)/2]} \delta_{j,k}\Big(T_{j,k}+T_{k-j,k}\Big)-\delta_{\frac{k}{2},k}\Gamma(\gamma+k/2)\Gamma(\gamma+k/2+1).$$
Therefore,
$$\textsc{K}^2(\gamma+1)-\textsc{K}(\gamma)\textsc{K}(\gamma+2)=\sum_{k=0}^\infty\sum_{j=0}^{[(k-1)/2]} \delta_{j,k}\Big(T_{j,k}+T_{k-j,k}\Big)-\delta_{\frac{k}{2},k}\Gamma(\gamma+k/2)\Gamma(\gamma+k/2+1).$$
Simplifying, we find  that 
\begin{equation*}
\begin{split}
T_{j,k}+T_{k-j,k}&=(k-2j)((2j-k)-1)\Gamma(\gamma+j)\Gamma(\gamma+(k-j))\leq0,
\end{split}
\end{equation*}
for $k<k-j$ (i.e $[(k-1)/2]\geq j$), which evidently completes the proof of Theorem \ref{tèè}.
\end{proof}
\begin{theorem}\label{T5}Let $\beta>\alpha>0$ and $\sigma>\gamma>0.$ Then the following inequalities
\begin{equation}\label{8}
\frac{\Gamma(\gamma)}{\Gamma(\sigma)} e^{\frac{\gamma}{\sigma}|z|}\leq {}_1\Psi_1\left[_{(\sigma, 1)}^{(\gamma, 1)}\Big| z\right]\leq \left(\frac{\Gamma(\gamma)}{\Gamma(\sigma)}\right).\left(1-\frac{\gamma}{\sigma}(1-e^{|z|})\right),\;z\in\mathbb{R}
\end{equation}
\begin{equation}\label{88}
W_{\alpha,\beta}^{\gamma,\sigma}(z)\leq \left(\frac{1}{\Gamma(\beta)}\right).\left[1-\frac{\gamma}{\sigma}\left(1-e^{\frac{\Gamma(\beta)}{\Gamma(\beta+\alpha)}z}\right)\right],z>0,
\end{equation}
\begin{equation}\label{888}
\check{W}_{\alpha,\beta}^{\gamma,\sigma}(z)\geq \left(\frac{1}{\Gamma(\beta)}\right). e^{\frac{\gamma\Gamma(\beta)}{\sigma\Gamma(\beta+\alpha)}z}, 0<z<1.
\end{equation}
\end{theorem}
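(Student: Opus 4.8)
\section*{Proof proposal for Theorem \ref{T5}}

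The plan is to reproduce, step for step, the three--part argument of Theorem \ref{T3}, now carried out for the Fox--Wright function ${}_1\Psi_1\left[_{(\sigma,1)}^{(\gamma,1)}\,\Big|\,z\right]$ in place of ${}_1\Psi_1\left[_{(\alpha,\alpha)}^{(\beta,\alpha)}\,\Big|\,z\right]$.

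First I would prove (\ref{8}) by invoking the two--sided Pog\'any--Srivastava bound already recalled in the proof of Theorem \ref{T3}. With the present data $a_1=\gamma,\ \alpha_1=1,\ b_1=\sigma,\ \beta_1=1$, the moment quantities collapse to
$$\psi_m=\frac{\Gamma(\gamma+m)}{\Gamma(\sigma+m)},\qquad \psi_1=\frac{\gamma}{\sigma}\,\psi_0,\qquad \psi_2=\frac{\gamma(\gamma+1)}{\sigma(\sigma+1)}\,\psi_0 .$$
The two hypotheses in (\ref{555}) then become transparent: $\psi_1>\psi_2$ is equivalent to $(\gamma+1)/(\sigma+1)<1$, while $\psi_1^2<\psi_0\psi_2$ is equivalent to $\gamma/\sigma<(\gamma+1)/(\sigma+1)$, and both hold precisely because $\sigma>\gamma$. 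Inserting $\psi_0=\Gamma(\gamma)/\Gamma(\sigma)$ and $\psi_1/\psi_0=\gamma/\sigma$ into the Pog\'any--Srivastava inequality yields (\ref{8}) directly.

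Next I would derive (\ref{88}) and (\ref{888}) exactly as (\ref{6666}) and (\ref{777}) follow from (\ref{666}). Starting from the integral representation \cite[eq. (34)]{AS} of $W_{\alpha,\beta}^{\gamma,\sigma}$ in terms of $W_{\alpha,\beta}$, for (\ref{88}) I bound the inner Wright function from above by (\ref{L}) at the argument $zt$; expanding the exponential, interchanging sum and integral, and evaluating $\int_0^1 t^{\gamma+n-1}(1-t)^{\sigma-\gamma-1}\,dt=B(\gamma+n,\sigma-\gamma)=\Gamma(\gamma+n)\Gamma(\sigma-\gamma)/\Gamma(\sigma+n)$ recasts the bound as
$$W_{\alpha,\beta}^{\gamma,\sigma}(z)\le \frac{1}{\Gamma(\beta)}\cdot\frac{\Gamma(\sigma)}{\Gamma(\gamma)}\;{}_1\Psi_1\left[_{(\sigma,1)}^{(\gamma,1)}\,\Big|\,\frac{\Gamma(\beta)}{\Gamma(\beta+\alpha)}z\right].$$
Applying the right--hand side of (\ref{8}) at the positive argument $\frac{\Gamma(\beta)}{\Gamma(\beta+\alpha)}z$ and cancelling $\Gamma(\sigma)/\Gamma(\gamma)$ against $\Gamma(\gamma)/\Gamma(\sigma)$ gives precisely (\ref{88}). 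For (\ref{888}) the identical computation, now bounding $W_{\alpha,\beta}(-zt)$ from below by (\ref{23}), produces the same expression with $z$ replaced by $-z$ inside ${}_1\Psi_1$; since the left--hand side of (\ref{8}) is governed by $|z|$, applying it yields the lower bound (\ref{888}) for $\check{W}_{\alpha,\beta}^{\gamma,\sigma}(z)$ on $(0,1)$.

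I expect no serious obstacle here, since the argument is entirely parallel to Theorem \ref{T3}; the only genuine content is the verification of the Pog\'any--Srivastava hypotheses, which thanks to the Pochhammer simplification of $\psi_m$ reduce cleanly to the standing assumption $\sigma>\gamma$. The single point demanding a little care is the term--by--term integration: it is legitimate because the defining series of ${}_1\Psi_1\left[_{(\sigma,1)}^{(\gamma,1)}\,\Big|\,z\right]$ converges absolutely for every bounded $z$ (here $1+\beta_1-\alpha_1=1>0$), so Fubini's theorem applies on $[0,1]$, and one should also note that (\ref{888}) inherits from (\ref{23}) its restriction to the interval $0<z<1$.
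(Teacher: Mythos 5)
Your proposal is correct and follows essentially the same route as the paper's own proof: the same verification of the Pog\'any--Srivastava hypotheses $\psi_1>\psi_2$ and $\psi_1^2<\psi_0\psi_2$ for the data $(\gamma,1),(\sigma,1)$ (you spell out the reduction to $\sigma>\gamma$, which the paper merely asserts), then the integral representation (\ref{&&&}) combined with (\ref{L}) and the right-hand side of (\ref{8}) to get (\ref{88}), and combined with the exponential lower bound and the left-hand side of (\ref{8}) to get (\ref{888}). Incidentally, your citation of (\ref{23}) as the bound applied inside the integral for (\ref{888}) is more accurate than the paper's own reference to (\ref{ZZZ2}) at that step, which is evidently a misprint, since the bound actually inserted there is the one for the classical Wright function.
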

\begin{proof}In our case, we have $\psi_0=\frac{\Gamma(\gamma)}{\Gamma(\sigma)},\psi_1=\frac{\Gamma(\gamma+1)}{\Gamma(\sigma+1)}$ and $\psi_2=\frac{\Gamma(\gamma+2)}{\Gamma(\sigma+2)}$. Since $\sigma>\gamma$, we get $\psi_1>\psi_2$ and $\psi_1^2<\psi_0\psi_2,$ and consequently the conditions (\ref{555}) holds. Then, by using (\ref{ùù}) we deduce that the inequality (\ref{8}) hold true. Next, we prove the inequality (\ref{88}). Combining the inequality (\ref{L}) and the representation integral of the generalized Wright function (\ref{&&&}), we get
\begin{equation}
\begin{split}
W_{\alpha,\beta}^{\gamma,\sigma}(z)&\leq\frac{\Gamma(\sigma)}{\Gamma(\beta)\Gamma(\gamma)\Gamma(\sigma-\gamma)}\int_0^1 t^{\gamma-1}(1-t)^{\sigma-\gamma-1}e^{\frac{\Gamma(\beta)}{\Gamma(\alpha+\beta)}zt}dt\\
&=\frac{\Gamma(\sigma)}{\Gamma(\beta)\Gamma(\gamma)\Gamma(\sigma-\gamma)}\int_0^1 t^{\gamma+n-1}(1-t)^{\sigma-\gamma-1}\left(\sum_{n=0}^\infty \frac{\Big((\Gamma(\beta)/\Gamma(\beta+\alpha))z\Big)^n}{n!}\right)dt\\
&=\frac{\Gamma(\sigma)}{\Gamma(\beta)\Gamma(\gamma)\Gamma(\sigma-\gamma)}\sum_{n=0}^\infty \frac{\Big((\Gamma(\beta)/\Gamma(\beta+\alpha))z\Big)^n}{n!}\int_0^1 t^{\gamma+n-1}(1-t)^{\sigma-\gamma-1}dt\\
&=\frac{\Gamma(\sigma)}{\Gamma(\beta)\Gamma(\gamma)\Gamma(\sigma-\gamma)}\sum_{n=0}^\infty \frac{B(\gamma+n,\sigma-\gamma)\Big((\Gamma(\beta)/\Gamma(\beta+\alpha))z\Big)^n}{n!}\\
&=\frac{\Gamma(\sigma)}{\Gamma(\beta)\Gamma(\gamma)}\sum_{n=0}^\infty \frac{\Gamma(\gamma+n)\Big((\Gamma(\beta)/\Gamma(\beta+\alpha))z\Big)^n}{\Gamma(\sigma+n)n!}\\
&=\frac{\Gamma(\sigma)}{\Gamma(\beta)\Gamma(\gamma)} {}_1\Psi_1\Big[^{(\gamma,1)}_{(\sigma,1)}\Big|\frac{\Gamma(\beta)}{\Gamma(\beta+\alpha)}z\Big].
\end{split}
\end{equation}
Combining this equation with the right hand side of inequalities  (\ref{8}), we obtain (\ref{88}). It remains to prove (\ref{888}). The integral representation (\ref{&&&}) of the function $W_{\alpha,\beta}^{\gamma,\sigma}(z)$ and inequality (\ref{ZZZ2}) yields that 
\begin{equation}
\begin{split}
\check{W}_{\alpha,\beta}^{\gamma,\sigma}(z)&\geq\frac{\Gamma(\sigma)}{\Gamma(\beta)\Gamma(\gamma)\Gamma(\sigma-\gamma)}\int_0^1 t^{\gamma-1}(1-t)^{\sigma-\gamma-1}e^{-\frac{\Gamma(\beta)}{\Gamma(\alpha+\beta)}zt}dt\\
&=\frac{\Gamma(\sigma)}{\Gamma(\beta)\Gamma(\gamma)\Gamma(\sigma-\gamma)}\int_0^1 t^{\gamma+n-1}(1-t)^{\sigma-\gamma-1}\left(\sum_{n=0}^\infty \frac{\Big(-(\Gamma(\beta)/\Gamma(\beta+\alpha))z\Big)^n}{n!}\right)dt\\
&=\frac{\Gamma(\sigma)}{\Gamma(\beta)\Gamma(\gamma)\Gamma(\sigma-\gamma)}\sum_{n=0}^\infty \frac{\Big(-(\Gamma(\beta)/\Gamma(\beta+\alpha))z\Big)^n}{n!}\int_0^1 t^{\gamma+n-1}(1-t)^{\sigma-\gamma-1}dt\\
&=\frac{\Gamma(\sigma)}{\Gamma(\beta)\Gamma(\gamma)} {}_1\Psi_1\left[^{(\gamma,1)}_{(\sigma,1)}\Bigg|-\frac{\Gamma(\beta)}{\Gamma(\beta+\alpha)}z\right].
\end{split}
\end{equation}
From the above inequality and the left hand side of inequalities (\ref{8}) we deduce (\ref{888}) for all $0<z<1$ and  $\beta>\alpha>0$ and $\sigma>\gamma>0.$  The proof of Theorem \ref{T5} is completes.
\end{proof}
\begin{remark} We point out that  the inequality (\ref{888}) complements and improve the inequality (\ref{ZZZ2}). Since $e^z\geq e^{-z}$ for all $z>0,$ we deduce that the inequality (\ref{888}) is better than (\ref{ZZZ2}).
\end{remark}
\begin{theorem}\label{T6} The following inequalities holds true:\\
\noindent \textbf{a.} For all $z>0,\;0<\gamma\leq 1$ and $\sigma-\gamma\geq1$, we have
\begin{equation}\label{1010}
W_{\alpha,\beta}^{\gamma,\sigma}(z)\leq \frac{\Gamma(\beta-\alpha)W_{\alpha,\beta-\alpha}(z)-1}{\Gamma(\beta-\alpha)z}=W_{\alpha,\beta}^{1,2}(z).
\end{equation}
\noindent \textbf{b.} For all $z>0,\;0<\gamma\leq 1$ and $\sigma-\gamma\geq2$, we have
\begin{equation}\label{11111}
W_{\alpha,\beta}^{\gamma,\sigma+1}(z)W_{\alpha,\beta}^{\gamma, \sigma-1}(z)\leq \frac{\Gamma(\sigma-\gamma)\Gamma(\sigma+1)\Gamma(\sigma-1)}{\Gamma(\sigma)\Gamma(\gamma)\Gamma(\sigma-\gamma+1)\Gamma(\sigma-\gamma-1)}W_{\alpha,\beta}^{1,2}(z)W_{\alpha,\beta}^{\gamma, \sigma}(z).
\end{equation}
\end{theorem}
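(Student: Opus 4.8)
The plan is to prove both inequalities by the device already used in the previous theorem: the Chebyshev integral inequality (\ref{OO}) applied to the Euler-type integral representation of the generalized Wright function,
$W_{\alpha,\beta}^{\gamma,\sigma}(z)=\frac{\Gamma(\sigma)}{\Gamma(\gamma)\Gamma(\sigma-\gamma)}\int_0^1 t^{\gamma-1}(1-t)^{\sigma-\gamma-1}W_{\alpha,\beta}(zt)\,dt$. Throughout, the base function $t\mapsto W_{\alpha,\beta}(zt)$ is nonnegative and increasing on $[0,1]$ for $z>0$, since all its Maclaurin coefficients are positive, and this monotonicity is what feeds the synchronicity hypothesis of (\ref{OO}).

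For part a I would first record the elementary identity behind the middle member of (\ref{1010}): expanding $W_{\alpha,\beta-\alpha}(z)=\sum_{k\ge0}z^k/(k!\,\Gamma(\alpha k+\beta-\alpha))$, the term $k=0$ cancels the $-1$, and after the shift $k=n+1$ one obtains $\frac{\Gamma(\beta-\alpha)W_{\alpha,\beta-\alpha}(z)-1}{\Gamma(\beta-\alpha)z}=\sum_{n\ge0}\frac{z^n}{(n+1)!\,\Gamma(\alpha n+\beta)}=W_{\alpha,\beta}^{1,2}(z)$, the last equality because $(1)_n/(2)_n=1/(n+1)$. Since $W_{\alpha,\beta}^{1,2}(z)=\int_0^1 W_{\alpha,\beta}(zt)\,dt$ (the case $\gamma=1,\sigma=2$ of the representation), the bound $W_{\alpha,\beta}^{\gamma,\sigma}(z)\le W_{\alpha,\beta}^{1,2}(z)$ follows from (\ref{OO}) with weight $p\equiv1$ applied to the Beta density $\phi(t)=\frac{1}{B(\gamma,\sigma-\gamma)}t^{\gamma-1}(1-t)^{\sigma-\gamma-1}$ and to $\psi(t)=W_{\alpha,\beta}(zt)$: because $0<\gamma\le1$ makes $t^{\gamma-1}$ nonincreasing and $\sigma-\gamma\ge1$ makes $(1-t)^{\sigma-\gamma-1}$ nonincreasing, $\phi$ is decreasing while $\psi$ is increasing, so the pair is asynchronous and (\ref{OO}) reverses; since $\int_0^1\phi=1$ this reads $\int_0^1\phi\psi\le\int_0^1\psi$, which is precisely the claim.

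For part b I would run the synchronous form of (\ref{OO}) with the weight $p(t)=t^{\gamma-1}W_{\alpha,\beta}(zt)$ and the two pure powers $f(t)=\frac{\Gamma(\sigma+1)}{\Gamma(\gamma)\Gamma(\sigma-\gamma+1)}(1-t)^{\sigma-\gamma}$ and $g(t)=\frac{\Gamma(\sigma-1)}{\Gamma(\gamma)\Gamma(\sigma-\gamma-1)}(1-t)^{\sigma-\gamma-2}$, chosen so that $\int_0^1 pf=W_{\alpha,\beta}^{\gamma,\sigma+1}(z)$ and $\int_0^1 pg=W_{\alpha,\beta}^{\gamma,\sigma-1}(z)$. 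Because $\sigma-\gamma\ge2$ both exponents are nonnegative, so $f$ and $g$ are decreasing and hence synchronous, and (\ref{OO}) gives $W_{\alpha,\beta}^{\gamma,\sigma+1}(z)W_{\alpha,\beta}^{\gamma,\sigma-1}(z)\le\big(\int_0^1 p\big)\big(\int_0^1 pfg\big)$. In the last factor $fg$ is a constant times $(1-t)^{2\sigma-2\gamma-2}$, and the pointwise bound $(1-t)^{2\sigma-2\gamma-2}\le(1-t)^{\sigma-\gamma-1}$ (valid on $[0,1]$ exactly because $\sigma-\gamma\ge1$) collapses $\int_0^1 pfg$ to a Beta-constant times $\int_0^1 t^{\gamma-1}(1-t)^{\sigma-\gamma-1}W_{\alpha,\beta}(zt)\,dt=\frac{\Gamma(\gamma)\Gamma(\sigma-\gamma)}{\Gamma(\sigma)}W_{\alpha,\beta}^{\gamma,\sigma}(z)$; routine Gamma-function bookkeeping then produces exactly the constant displayed in (\ref{11111}) multiplying $W_{\alpha,\beta}^{\gamma,\sigma}(z)$.

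The step I expect to be the genuine obstacle is the remaining factor $\int_0^1 p=\int_0^1 t^{\gamma-1}W_{\alpha,\beta}(zt)\,dt$, which must be matched with the $W_{\alpha,\beta}^{1,2}(z)$ appearing in (\ref{11111}). Term-by-term integration gives $\int_0^1 t^{\gamma-1}W_{\alpha,\beta}(zt)\,dt=\frac{1}{\gamma}W_{\alpha,\beta}^{\gamma,\gamma+1}(z)$, and this equals $W_{\alpha,\beta}^{1,2}(z)$ only when $\gamma=1$; for $0<\gamma<1$ one has $\gamma+k\le1+k$ for every $k$, whence $\frac{1}{\gamma}W_{\alpha,\beta}^{\gamma,\gamma+1}(z)\ge W_{\alpha,\beta}^{1,2}(z)$, so the Chebyshev route delivers only the weaker bound with $\frac{1}{\gamma}W_{\alpha,\beta}^{\gamma,\gamma+1}$ in place of $W_{\alpha,\beta}^{1,2}$. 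I would therefore either state part b with $\gamma=1$ (where the two factors coincide and the proof closes) or replace $W_{\alpha,\beta}^{1,2}$ by $\frac{1}{\gamma}W_{\alpha,\beta}^{\gamma,\gamma+1}$; trying to force $W_{\alpha,\beta}^{1,2}=\int_0^1 W_{\alpha,\beta}(zt)\,dt$ by instead taking $p(t)=W_{\alpha,\beta}(zt)$ does not help, since then $f,g$ must carry the factor $t^{\gamma-1}$ and the intermediate integrand $t^{2\gamma-2}(1-t)^{2\sigma-2\gamma-2}$ can no longer be dominated by $t^{\gamma-1}(1-t)^{\sigma-\gamma-1}$ (the $t$-power runs the wrong way for $\gamma<1$, and may even be non-integrable near $0$). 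Evaluating both sides of (\ref{11111}) at $z=0$ indeed shows the displayed constant can fall below $1$ when $\gamma<1$, so I would scrutinize the hypotheses here before asserting the $W_{\alpha,\beta}^{1,2}$ form in full generality.
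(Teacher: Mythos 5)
Your part \textbf{a} is correct and is essentially the paper's own proof: the same reversed (asynchronous) Chebyshev application of (\ref{OO}) with $p\equiv 1$, the Beta density $t^{\gamma-1}(1-t)^{\sigma-\gamma-1}/B(\gamma,\sigma-\gamma)$ and the increasing function $t\mapsto W_{\alpha,\beta}(zt)$, together with the identification $\int_0^1 W_{\alpha,\beta}(zt)\,dt=W_{\alpha,\beta}^{1,2}(z)$; the paper obtains the middle member of (\ref{1010}) by writing $W_{\alpha,\beta}(zt)=\frac{1}{z}\frac{d}{dt}W_{\alpha,\beta-\alpha}(zt)$ rather than by your index shift, which is purely cosmetic.

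For part \textbf{b}, the obstruction you isolated is not a weakness of your argument; it is precisely the flaw in the paper's own proof. The paper keeps $p(t)=W_{\alpha,\beta}(zt)$ and puts the factor $t^{\gamma-1}$ into both $f(t)=(1-t)^{\sigma-\gamma}t^{\gamma-1}$ and $g(t)=(1-t)^{\sigma-\gamma-2}t^{\gamma-1}$, so that after applying (\ref{OO}) it must dominate
\begin{equation*}
\int_0^1 t^{2\gamma-2}(1-t)^{2\sigma-2\gamma-2}W_{\alpha,\beta}(zt)\,dt \;\;\textrm{by}\;\; \int_0^1 t^{\gamma-1}(1-t)^{\sigma-\gamma-1}W_{\alpha,\beta}(zt)\,dt,
\end{equation*}
which requires $t^{2\gamma-2}\le t^{\gamma-1}$ on $(0,1)$, i.e.\ $\gamma\ge1$ --- exactly the ``$t$-power runs the wrong way'' problem that in your variant resurfaces as $\int_0^1 p=\frac{1}{\gamma}W_{\alpha,\beta}^{\gamma,\gamma+1}(z)\ne W_{\alpha,\beta}^{1,2}(z)$. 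Under the stated hypothesis $0<\gamma\le1$, the paper's chain is therefore valid only at $\gamma=1$, where it coincides with your argument and closes. Your $z=0$ test is also decisive: since $W_{\alpha,\beta}^{\gamma,\sigma}(0)=1/\Gamma(\beta)$ for all parameters, inequality (\ref{11111}) as $z\to0^+$ forces the constant
\begin{equation*}
\frac{\Gamma(\sigma-\gamma)\Gamma(\sigma+1)\Gamma(\sigma-1)}{\Gamma(\sigma)\Gamma(\gamma)\Gamma(\sigma-\gamma+1)\Gamma(\sigma-\gamma-1)}=\frac{\sigma\,\Gamma(\sigma-1)}{(\sigma-\gamma)\Gamma(\gamma)\Gamma(\sigma-\gamma-1)}
\end{equation*}
to be at least $1$, yet for $\gamma=1/2$, $\sigma=5/2$ it equals $5/8$, so (\ref{11111}) fails for all small $z>0$. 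Thus part \textbf{b} as stated is false for $\gamma<1$, and your proposed repairs --- either restrict to $\gamma=1$, or keep $0<\gamma\le1$ and replace $W_{\alpha,\beta}^{1,2}$ by $\frac{1}{\gamma}W_{\alpha,\beta}^{\gamma,\gamma+1}$ --- are the correct ones.
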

\begin{proof}\textbf{a.} By again using the Chebyshev integral inequality (\ref{OO}), we consider the functions $p,f,g:[0,1]\longrightarrow\mathbb{R}$ defined by  $$p(t)=1,\;f(t)=(B(\sigma-\gamma,\gamma))^{-1} (1-t)^{\sigma-\gamma-1}t^{\gamma-1}\;\textrm{and}\;\;g(t)=W_{\alpha,\beta}(zt).$$
Observe that the function $f(t)$ is decreasing and $g(t)$ is increasing on $[0,1],$ if $0<\gamma\leq 1$ and $\sigma-\gamma\geq1.$ On the other hand,
$$\int_0^1p(t)f(t)dt=1,\;\;\textrm{and}\;\;\int_0^1p(t) g(t)=\frac{1}{z}\int_0^1 \left(W_{\alpha,\beta-\alpha}(zt)\right)^\prime dt=\frac{1}{z}\left(W_{\alpha,\beta-\alpha}(z)-\frac{1}{\Gamma(\beta-\alpha)}\right)=W_{\alpha,\beta}^{1,2}(z).$$
So, the integral representation (\ref{&&&}) completes the proof of inequality (\ref{1010}). \\
\textbf{b.} For the proof of inequality (\ref{11111}), we consider the functions $p,f,g:[0,1]\longrightarrow\mathbb{R}$ defined by 
$$p(t)=W_{\alpha,\beta}(zt),\;f(t)=(1-t)^{\sigma-\gamma}t^{\gamma-1},\;\;\textrm{and}\;\;g(t)= (1-t)^{\sigma-\gamma-2}t^{\gamma-1}.$$
Thus,
$$\int_0^1p(t)f(t)dt=\frac{\Gamma(\gamma)\Gamma(\sigma-\gamma+1)}{\Gamma(\sigma+1)}W_{\alpha,\beta}^{\gamma, \sigma+1}(z),\;\;\int_0^1p(t) g(t)=\frac{\Gamma(\gamma)\Gamma(\sigma-\gamma-1)}{\Gamma(\sigma-1)}W_{\alpha,\beta}^{\gamma, \sigma-1}(z),$$
and
$$\int_0^1p(t)dt=\int_0^1W_{\alpha,\beta}(zt)dt=W_{\alpha,\beta}^{1,2}(zt)=\frac{1}{z}\left(W_{\alpha,\beta}(z)-\frac{1}{\Gamma(\beta-\alpha)}\right).$$
On the other hand, the functions $f(t)$ and $g(t)$ are decreasing on $[0,1]$ if $0<\gamma\leq 1$ and $\sigma-\gamma\geq2.$ Therefore, the Chebyshev integral inequality (\ref{OO}) yields that 
\begin{equation*}
\begin{split}
\frac{\Gamma^2(\gamma)\Gamma(\sigma-\gamma+1)\Gamma(\sigma-\gamma-1)}{\Gamma(\sigma+1)\Gamma(\sigma-1)}W_{\alpha,\beta}^{\gamma, \sigma+1}(z)W_{\alpha,\beta}^{\gamma, \sigma-1}(z)&=\left(\int_0^1p(t)f(t)dt\right).\left(\int_0^1p(t)g(t)dt\right)\\
&\leq \left(\int_0^1W_{\alpha,\beta}(zt)dt\right).\left(\int_0^1 (1-t)^{2\sigma-2\gamma-2}t^{2\gamma-2}W_{\alpha,\beta}(zt)\right)dt\\
&\leq \left(\int_0^1W_{\alpha,\beta}(zt)dt\right).\left(\int_0^1 (1-t)^{\sigma-\gamma-1}t^{\gamma-1}W_{\alpha,\beta}(zt)\right)dt\\
&=\frac{\Gamma(\gamma)\Gamma(\sigma-\gamma)}{\Gamma(\sigma)}W_{\alpha,\beta}^{1,2}(z)W_{\alpha,\beta}^{\gamma, \sigma}(z).
\end{split}
\end{equation*}
The proof of Theorem \ref{T6} is completes.
\end{proof}
\begin{remark}We note that the  results obtained  in section 3 is not a generalization of the results obtained in section 2. except Theorem \ref{T4}, assertion \textbf{a.} and equations (\ref{ZZZ0}), (\ref{ZZZ1}) and (\ref{ZZZ2}).  Indeed, the results in section 3 follows by using the new integral representation (\ref{1}) and the results of section 3 follows by using the integral representation (\ref{&&&}) which is  different from the integral representation (\ref{1}). Then, in the same way we obtain that the function $W_{\alpha,\beta}^{\sigma,\gamma}(z)$ admits this integral representation
\begin{equation}
W_{\alpha,\beta}^{\gamma, \sigma}(z)=c_{\alpha,\beta}\int_0^1(1-t^{1/\alpha})^{\beta-\alpha-1}W_{\alpha,\alpha}^{\gamma,\sigma}(zt)dt,
\end{equation} 
which is a generalization of (\ref{1}), and consequently we can obtain the generalization of some results in section 2.
\end{remark}
\section{Applications: Monotonicity patterns and functional inequalities for the four--parametric Mittag--Leffler functions}
The Mittag--Leffler functions with $2n$  parameters are defined for $B_j\in\mathbb{R}\;(B_1^2+...+B_n^2\neq0)$ and $\beta_j\in\mathbb{C}\;(j=1,...,n\in\mathbb{N})$ by the series
\begin{equation}\label{197}
E_{(B,\beta)_n}(z)=\sum_{k=0}^\infty\frac{z^k}{\prod_{j=1}^n\Gamma(\beta_j+kB_j)},\;z\in\mathbb{C}.
\end{equation}
When $n = 1$, the deﬁnition in (\ref{197})  coincides with the definition of the two--parametric Mittag--Leffler function
\begin{equation}\label{198}
E_{(B,\beta)_1}(z)=E_{B, \beta}(z)=\sum_{k=0}^\infty\frac{z^k}{\Gamma(\beta+kB)},\;z\in\mathbb{C},
\end{equation}
and and similarly for $n = 2$, where $E_{(B, \beta)_2}(z)$  coincides with the four--parametric Mittag--Leffler function
\begin{equation}\label{199}
E_{(B,\beta)_2}(z)=E_{B_1, \beta_1; B_2\beta_2}(z)=\sum_{k=0}^\infty\frac{z^k}{\Gamma(\beta_1+kB_1)\Gamma(\beta_2+kB_2)},\;z\in\mathbb{C},
\end{equation}
is closer by its properties to the Wright function $W_{B,\beta}(z)$ defined by
\begin{equation}
W_{B,\beta}(z)=\sum_{k=0}^\infty\frac{z^k}{k!\Gamma(\beta_1+kB_1))},\;z\in\mathbb{C}.
\end{equation}
The generalized $2n-$parametric Mittag-Leffler function $E_{(\beta,B)_n}(z)$ can be represented in terms of the  Fox--Wright hypergeometric function ${}_p\Psi_q(z)$ by
\begin{equation}\label{199}
E_{(B,\beta)_n}(z)={}_1\Psi_n\Big[_{(\beta_1,B_1),...,(\beta_q,B_q)}^{\;\;\;\;\;\;\;\;(1,1)\;\;\;\;}\Big|z \Big],\;z\in\mathbb{C}.
\end{equation}
Letting $\gamma=1$ in definition (\ref{def}) of the generalized Wright function, we obtain that 
\begin{equation}
W_{\alpha,\beta}^{1,\sigma}(z)=\Gamma(\sigma)E_{\alpha,\beta;1,\sigma}(z),
\end{equation}
and consequently we obtain the following assertions for the four--parametric Mittag--Leffler function $E_{\alpha,\beta;1,\sigma}(z)$:
\begin{theorem}\textbf{a.} The function $z\mapsto E_{\alpha,\beta;1,\sigma}(-z)=\check{E}_{\alpha,\beta;1,\sigma}(z)$ is completely monotonic and log--convex on $(0,1)$ for all $\beta>\alpha>x^\star$ and $\sigma>1.$
Furthermore, the following inequalities hold true:
\begin{equation}
\check{E}_{\alpha,\beta;1,\sigma}(x+y)\geq \left(\frac{\Gamma(\sigma)}{\Gamma(\beta)}\right).\check{E}_{\alpha,\beta;1,\sigma}(x)\check{E}_{\alpha,\beta;1,\sigma}(y),\;0<x+y<1.
\end{equation}
\begin{equation}
\frac{2}{\sigma+1}\check{E}_{\alpha,\beta+2\alpha;3,\sigma+2}(z)\check{E}_{\alpha,\beta;1,\sigma}(z)-\frac{1}{\sigma}\Big(\check{E}_{\alpha,\beta+\alpha;2,\sigma+1}(z)\Big)^2\geq0,\;0<z<1.
\end{equation}
\begin{equation}
\check{E}_{\alpha,\beta;1,\sigma}(z)\geq \frac{e^{\frac{\Gamma(\beta)}{\sigma\Gamma(\beta+\alpha)}z}}{\Gamma(\sigma)}, 0<z<1.
\end{equation}
\textbf{b.} The function $\sigma\mapsto\Gamma(\sigma)E_{\alpha,\beta;1,\sigma}(z)$ is log-convex on $(0,\infty)$ for all $z,\alpha,\beta>0.$ Moreover, the following Tur\'an type inequality
$$E_{\alpha,\beta;1,\sigma+2}(z)E_{\alpha,\beta;1,\sigma}(z)-\frac{\sigma}{\sigma+1}\Big(E_{\alpha,\beta;1,\sigma+1}(z)\Big)^2\geq0,$$
hold true for all $z,\alpha,\beta>0.$\\
\noindent\textbf{c.} Let $\beta>\alpha>0$ and $\sigma>1.$ Then, the following inequality
$$E_{\alpha,\beta;1,\sigma}(z)\leq \left(\frac{\Gamma(\sigma)}{\Gamma(\beta)}\right).\left[1-\frac{1}{\sigma}\left(1-e^{\frac{\Gamma(\beta)}{\Gamma(\beta+\alpha)}z}\right)\right],$$
hold true for all $z>0.$\\
\noindent\textbf{d.} Let $\beta>\alpha>0$ and $\sigma>1.$ Then
$$E_{\alpha,\beta;1,\sigma+1}(z)E_{\alpha,\beta;1,\sigma-1}(z)\leq\frac{\Gamma(\sigma-1)}{\Gamma(\sigma)\Gamma(\sigma-2)}E_{\alpha,\beta;1,2}(z)E_{\alpha,\beta;1,\sigma}(z),$$
hold for all $z>0$ and $\sigma\geq3.$
\end{theorem}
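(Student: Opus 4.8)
The plan is to obtain every assertion as the specialization $\gamma=1$ of the results already proved for the generalized Wright function, combined with the identity $W_{\alpha,\beta}^{1,\sigma}(z)=\Gamma(\sigma)E_{\alpha,\beta;1,\sigma}(z)$ recorded just above the statement. Because $\check{E}_{\alpha,\beta;1,\sigma}(z)=\frac{1}{\Gamma(\sigma)}\check{W}_{\alpha,\beta}^{1,\sigma}(z)$ is a positive constant multiple of $\check{W}_{\alpha,\beta}^{1,\sigma}$, the complete monotonicity and log--convexity of $\check{E}_{\alpha,\beta;1,\sigma}$ on $(0,1)$ asserted in part (a) are inherited verbatim from Theorem \ref{T4}(a), once one checks that the hypotheses $\beta>\alpha>x^\star$ and $\sigma>\gamma$ become exactly $\beta>\alpha>x^\star$ and $\sigma>1$. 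Part (a) then reduces to rewriting (\ref{ZZZ0}), (\ref{ZZZ1}) and (\ref{ZZZ2}) after setting $\gamma=1$ and substituting the identity.

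First I would treat the super--additivity bound: inserting $\check{W}^{1,\sigma}=\Gamma(\sigma)\check{E}$ into (\ref{ZZZ0}), the two factors $\Gamma(\sigma)$ on the right against one on the left leave precisely the constant $\Gamma(\sigma)/\Gamma(\beta)$; the exponential bound is produced the same way from (\ref{ZZZ2}). For part (b) the log--convexity of $\sigma\mapsto\Gamma(\sigma)E_{\alpha,\beta;1,\sigma}(z)=W_{\alpha,\beta}^{1,\sigma}(z)$ is literally Theorem \ref{T4}(b), and its Tur\'an inequality follows from (\ref{ZZZ3}) at $\gamma=1$: substituting the identity in all three factors and using $\Gamma(\sigma)\Gamma(\sigma+2)/\Gamma(\sigma+1)^2=(\sigma+1)/\sigma$ collapses the coefficient to the stated $\sigma/(\sigma+1)$. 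Parts (c) and (d) are entirely parallel: (c) is (\ref{88}) at $\gamma=1$ after dividing through by $\Gamma(\sigma)$, while (d) comes from (\ref{11111}) at $\gamma=1$, where substituting $W^{1,\tau}=\Gamma(\tau)E_{\alpha,\beta;1,\tau}$ in all four factors and repeatedly applying $\Gamma(\tau+1)=\tau\Gamma(\tau)$ telescopes the Gamma product down to $\Gamma(\sigma-1)/(\Gamma(\sigma)\Gamma(\sigma-2))$.

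Thus the argument is almost purely the algebraic bookkeeping of the Pochhammer and Gamma normalizations, and I expect the only genuine friction to sit in the Tur\'an--type inequality of part (a). The parameter shifts in (\ref{ZZZ1}) raise the upper index from $\gamma=1$ to $2$ and $3$, so the objects $\check{W}_{\alpha,\beta+\alpha}^{2,\sigma+1}$ and $\check{W}_{\alpha,\beta+2\alpha}^{3,\sigma+2}$ do \emph{not} reduce to genuine four--parametric Mittag--Leffler functions through the simple $\gamma=1$ identity; one must read the symbols $\check{E}_{\alpha,\beta+\alpha;2,\sigma+1}$ and $\check{E}_{\alpha,\beta+2\alpha;3,\sigma+2}$ as the correspondingly normalized generalized Wright functions (or, alternatively, re-derive the inequality directly from the log--convexity of $z\mapsto\check{E}_{\alpha,\beta;1,\sigma}(z)$ together with the differentiation formula for $W_{\alpha,\beta}^{\gamma,\sigma}$). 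Checking that the factors $(\gamma+1)/(\sigma+1)$ and $\gamma/\sigma$ in (\ref{ZZZ1}) reproduce the stated $2/(\sigma+1)$ and $1/\sigma$ is the one place demanding care; everywhere else the reduction is mechanical.
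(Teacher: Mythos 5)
Your proposal is correct and is precisely the paper's own (implicit) argument: the paper gives no separate proof of this theorem, obtaining it exactly as you do by setting $\gamma=1$ in Theorem \ref{T4}, Theorem \ref{T5} and Theorem \ref{T6} and invoking the identity $W_{\alpha,\beta}^{1,\sigma}(z)=\Gamma(\sigma)E_{\alpha,\beta;1,\sigma}(z)$, with the Gamma-factor bookkeeping you describe (e.g.\ $\Gamma(\sigma)\Gamma(\sigma+2)/\Gamma^2(\sigma+1)=(\sigma+1)/\sigma$ for part (b), and the telescoping to $\Gamma(\sigma-1)/(\Gamma(\sigma)\Gamma(\sigma-2))$ for part (d)). Your caveat about the Tur\'an inequality in part (a) is well taken and is in fact a point the paper glosses over: since $(2)_n/n!=n+1$, the symbols $\check{E}_{\alpha,\beta+\alpha;2,\sigma+1}$ and $\check{E}_{\alpha,\beta+2\alpha;3,\sigma+2}$ cannot be literal four--parametric Mittag--Leffler functions and must be read, as you say, as the correspondingly normalized generalized Wright functions inherited from (\ref{ZZZ1}).
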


\end{document}